\title{Borel Colouring Bad Sequences}
\author{Keegan Dasilva Barbosa}
\date{ }
\newcommand{\QQ}{\mathbb{Q}}
\newtheorem{theorem}{Theorem}[section]
\newtheorem{prop}{Proposition}
\newtheorem{corollary}{Corollary}[theorem]
\newtheorem{lemma}[theorem]{Lemma}
\theoremstyle{definition}
\newtheorem{definition}{Definition}[section]
\newtheorem{fact}{Fact}[section]
\begin{document}
	\maketitle
	\begin{abstract}
		Every better quasi-order codifies a Borel graph that does not contain a copy of the shift graph. It is known that there is a better quasi-order that codes a Borel graph with infinite Borel chromatic number, though one has yet to be explicitly constructed. In this paper, we show that examples cannot be constructed via standard methods. Moreover, we show that most of the known better quasi-orders are non-examples, suggesting there is still a class of better quasi-orders with interesting combinatorial properties who's elements/members still remain unknown.
	\end{abstract}
	\section{Introduction}
	In \cite{KST}, it was discovered that there is a graph with uncountable Borel chromatic number which is minimal with respect to Borel graph homomorphism. It was then questioned whether or not the shift graph was minimal in this regard in the class of Borel graphs generated by a single Borel function. Interestingly, Pequignot showed in \cite{Peq} that the shift graph contained a Borel subgraph with infinite Borel chromatic number, but could not embed the shift graph in a Borel manner. These graphs were codified by better quasi-orders (BQO). Further work was done by Todorcevic and Vidnyanszky, and it was further proved that no finite basis could exist, let alone a singular minimal element \cite{Vid}. However, there is a deep question that still remains unanswered in \cite{Peq}. While it is known a BQO can codify a graph with infinite Borel chromatic number, by the nature of the proof relying on a complexity argument of Marcone \cite{Marcone2},a concrete BQO has yet to be identified or explicitly constructed. There were some conjectures made by Pequignot. In this paper, we show that these BQOs fail to codify a graph with infinite Borel chromatic number. Moreover, we show that a BQO recursively constructed from simpler BQOs by the classical means of labeling trees or linear orders \cite{Laver1} \cite{Laver2} \cite{Pouzet} also fails to produce graphs with infinite Borel chromatic number. As a consequence, even any countable collection of $\sigma$-scattered linear orders under the embedding relation fails to be complex enough to code a graph with infinite Borel chromatic number.  This suggests there is still a wealth of BQOs with strong indecomposability properties that have yet to be explicitly constructed. \\
	\\
	This paper will be split into two sections excluding introduction and acknowledgements. Section 2 will be focused on the necessary background information required to understand the problem. This includes the basics of Borel graph combinatorics, BQO theory, and the primary tools and techniques used in these fields. It is suggested a reader skips through the parts of this section they are familiar with. If one wants a deeper understanding of these materials, the author suggests any of the following texts \cite{DescSet} \cite{DescGraph} \cite{Kunen} \cite{FRec}\cite{Marcone1}. However, section 2.5 should not be skipped. It contains important definitions specialized for this problem, as well as a very important lemma. Section 3 is split into three components. Section 3.1 is where we will introduce the colouring algorithm that will allow us to effectively Borel $3$-colour graphs by splitting them into homogeneous pieces that are easy to handle. Section 3.2 will be where we prove that if $Q$ codes a Borel $3$-colourable graph (we call such $Q$ \textit{thin}), then so does $Q^{<\omega}$ under the Higman order \cite{Higman}. Consequently, the same will be true of finite trees labelled by members of $Q$, $\mathcal{FT}_Q $, under the standard orders \cite{Laver1} \cite{Laver2} \cite{Pouzet}. Finally, section 3.3 will be where we jump from finite to infinite, and show that if $Q$ is thin, so is the class of $Q$ labeled $\sigma$-scattered linear orders under $\leqq_{emb}$.
	\section{Notation}
	\subsection{Basics of Descriptive Set Theory}
	\begin{definition}
		We call a topological space $X$ \textit{Polish} if its topology is complete, separable, and metrizable. 
	\end{definition}
Not much on the finer combinatorics of Polish spaces will be relevant for us. We'll mostly be interested in the space of countable sequences of a countable set under the topology of pointwise convergence. However, the reader is highly encouraged to read \cite{DescSet} for more on the subject.
\begin{definition}
	Given a set $A$, $[A]^\omega = \{B\subseteq A : |B| = \omega\}$. 
\end{definition}
\begin{fact}
	Given a countable set $A$, $A^\omega$ under the topology of pointwise convergence is Polish. 
\end{fact}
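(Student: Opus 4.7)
The plan is to put the discrete topology on $A$ and observe that the topology of pointwise convergence on $A^\omega$ is then nothing other than the product topology; the three Polish properties can then be verified directly by exhibiting a concrete compatible complete separable metric. Since $A$ is countable, every singleton is open in the discrete topology, so pointwise convergence $f_k(n) \to f(n)$ amounts to eventual equality $f_k(n) = f(n)$ for each $n$, which is precisely convergence in the product of countably many copies of discrete $A$.

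For the metric I would use the Baire-style ultrametric $d(x,y) = 2^{-n(x,y)}$, where $n(x,y) = \min\{n : x(n)\neq y(n)\}$ and $d(x,x) = 0$. A short routine verification shows $d$ is an ultrametric and that its open balls are the basic clopen cylinders $\{x \in A^\omega : x(i) = s(i)\ \text{for}\ i<|s|\}$ indexed by $s \in A^{<\omega}$, so $d$ induces the product topology. Completeness is then immediate: if $(x_k)$ is $d$-Cauchy then for every $n$ there is $K$ with $d(x_k, x_\ell) < 2^{-(n+1)}$ for $k,\ell \geq K$, forcing eventual agreement on coordinate $n$, and the coordinatewise stable values assemble into a limit $x \in A^\omega$.

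Separability is handled by fixing any $a_0 \in A$ and setting $D$ to be the collection of sequences which agree with the constant sequence $a_0, a_0, \ldots$ past some finite index. Since $A^{<\omega}$ is a countable union of countable sets, $D$ is countable; and every basic clopen cylinder determined by some $s \in A^{<\omega}$ contains the element of $D$ obtained by extending $s$ by $a_0$'s, so $D$ is dense.

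There is no real obstacle here; the fact is included as a foundational anchor for the later work on subspaces of $A^\omega$ such as $[A]^\omega$ and various codings of $\sigma$-scattered linear orders. The only conceptual point worth flagging is that the discreteness of the target forces pointwise convergence to mean eventual equality rather than metric approximation, which in turn makes $A^\omega$ zero-dimensional and will be convenient when invoking descriptive-set-theoretic tools in Section 3.
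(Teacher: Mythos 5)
Your proof is correct, and it is the standard textbook argument; the paper states this as a background Fact without giving any proof, so there is nothing to compare against. The Baire ultrametric, the identification of its balls with cylinder sets, the coordinatewise stabilization argument for completeness, and the density of eventually-constant sequences are all exactly as one would find in Kechris, and your remark that discreteness of $A$ turns pointwise convergence into eventual coordinatewise equality is the right conceptual observation.
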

Note that if $A$ is countable, $[A]^\omega$ is also Polish with topology determined by $X_n \rightarrow X$ if and only if $\bigcap\limits_{n= 1}^\infty \bigcup\limits_{j=n}^\infty X_j = X $. Alternatively, after well ordering $A$, one can identify members from $[A]^\omega$ with strictly increasing sequences in $A^\omega$ with the subspace topology. These definitions are equivalent. 
\begin{fact}
	Given a set $A$, we define $A^{<\omega} $ to be the set of functions with domain $\{0,...,n\}$ for some $n\in \omega$ and codomain $A$. 
\end{fact}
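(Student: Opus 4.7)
The statement is essentially a definition, introducing $A^{<\omega}$ as $\bigcup_{n\in\omega} A^{\{0,\dots,n\}}$, so the content to verify is merely that this union is well-formed and that the structural features implicitly needed later are in place. The plan is first to observe that for each $n \in \omega$, the function space $A^{\{0,\dots,n\}}$ is a set, since each such function can be identified with its graph, a subset of $\{0,\dots,n\}\times A$, and the collection of all such graphs is a subset of the power set of $\{0,\dots,n\}\times A$; taking the union over $n \in \omega$ then yields a set by the axiom of union. There is nothing combinatorial to establish at this stage.

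The natural follow-ups I would record for later use are threefold. First, when $A$ is countable, $A^{<\omega}$ is countable as a countable union of countable sets; this underwrites the countability of $Q^{<\omega}$ under the Higman order in Section 3.2. Second, $A^{<\omega}$ carries a canonical tree structure under end-extension $s \sqsubseteq t$, and the concatenation $s \conc t$ (already abbreviated in the preamble) is well-defined, with both $s$ and $t$ sitting below $s \conc t$ in this order. Third, if $A$ is itself a Polish space, then $A^{<\omega}$, viewed as the disjoint union $\bigsqcup_{n} A^{\{0,\dots,n\}}$, inherits a Polish topology from the general fact that a countable disjoint union of Polish spaces is Polish; in the special case where $A$ is countable and discrete, this reduces to $A^{<\omega}$ being a countable discrete Polish space.

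The main step is trivial, so there is no real obstacle. The substance of the paper lies in how $A^{<\omega}$ interacts with the quasi-orderings that follow (the Higman order, tree labelings, $\sigma$-scattered linear orders), not in the definition itself. If instead the intended content were a nontrivial topological assertion --- for instance, that $A^{<\omega}$ with a canonical topology is Polish whenever $A$ is --- then the proof would factor through Fact 1 and the identification of $A^{<\omega}$ with the countable disjoint union above, reducing matters to Polishness of each $A^{\{0,\dots,n\}}$, which follows from Fact 1 (or more directly from $A^{\{0,\dots,n\}} \cong A^{n+1}$ being a finite product of Polish spaces). In either reading, the ``hard part'' is deferred to the later sections, where the real combinatorial work on thinness and Borel chromatic numbers takes place.
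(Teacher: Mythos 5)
You are right that this ``Fact'' is a definition in disguise; the paper supplies no proof, only a one-sentence remark that it is often more useful to view the elements as finite sequences than as functions, and your observation that the union $\bigcup_{n\in\omega}A^{\{0,\dots,n\}}$ is a well-formed set captures all there is to check. Your follow-up remarks (countability, the end-extension tree structure, Polishness of the disjoint union) are sensible but go beyond anything the paper records here.
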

Elements in this set are finite sequences of members from $A$. In fact, considering the members as finite sequences will often be of more use to us than thinking of them as functions.
\begin{definition}
	Given a set $A$, we define the set $[A]^{<\omega} = \{B\subseteq A : |B| < \omega \} $. 
\end{definition}
\begin{definition}
	Given two Polish spaces $V_1$ and $V_2$, we say a function\\ $f:V_1 \rightarrow V_2$ is \textit{Baire class 1} if and only if there is a sequence of continuous functions $f_k:V_1 \rightarrow V_2$, $f_k$ converges to $f$ pointwise.
\end{definition}
All Baire class 1 functions are Borel. Moreover, being Baire class 1 guarantees the preimage of every open set is $G_\delta$ (countable intersection of open sets). One can check \cite{KechLouv} for more on Baire class 1 functions. We will need that these functions are Borel, and not much else. Their main appearance is in the construction of the $\partial^\infty$ operation in section 3.2. 
	\subsection{Borel Graph Combinatorics}
	\begin{definition}
		A \textit{Borel graph} $G$ is a pair $(V,E)$, where $V$ is a Polish space and $E\subseteq [V]^2$ is Borel. 
	\end{definition}
We give $[V]^2$ the topology it inherits when viewed as a subset of $V^2$. Since we are working in the context of descriptive set theory, all properties that we will be discussing from graph theory need to be Borel definable. By this, we mean that we will not be interested in graph homomorphisms, but rather Borel graph homomorphisms. Nor will we work with the standard chromatic number. While it may seem as though this is a small requirement, it is actually rather major. Some graphs that are bipartite in the classic sense may have large Borel chromatic number. For more on this, see \cite{KST}. 
\begin{definition}
	Given Borel graphs $G_1=(V_1,E_1)$ and $G_2=(V_2,E_2)$, a \textit{Borel graph homomorphism} is a Borel map $f: V_1 \rightarrow V_2$ with the property that $\forall x,y \in V_1$, $xE_1 y \Rightarrow f(x) E_2 f(y)$. If such a homomorphism exists, we write $G_1 \preceq_B G_2$. 
\end{definition}
\begin{definition}
	Given a Borel graph $G = (V,E)$, we define the \textit{Borel chromatic number of $G$} to be 
	\begin{align*}
		\chi_B (G) &= \text{min} \{|Y| :Y\text{ Polish},\;  G\preceq_B (Y,[Y]^2)  \}
	\end{align*}
\end{definition}
Note that the requirement that the graph $Y$ in the above definition be Polish means that the only possibilities for $\chi_B(G)$ are $\{1,2,3,4,... ,\aleph_0, 2^{\aleph_0} \}$. We will be studying graphs whose edge set is determined by Borel functions. The range of plausible Borel chromatic numbers these graphs can achieve is finite. In particular, the Borel chromatic number can either be $1$, $2$, $3$, or $\aleph_0$. 
\begin{definition}
	We define $s:2^\omega \rightarrow 2^\omega$ via $\forall k \in \omega, s(x)(k) = x(k+1)$.
\end{definition}
\begin{definition}
	We say a Borel graph $G=(V,E_f)$ is \textit{generated by the Borel function $f$ on $V$} if $f:V\rightarrow V$ Borel and
	\begin{align*}
		\forall x,y \in V,  xE_f y \iff (x\neq y) \text{ and } (f(x) = y \text{ or } f(y) = x)
	\end{align*} 
\end{definition}
	\begin{fact}
		Given a Borel graph $G$ generated by a Borel function $f$ on a Polish space $V$, the following are equivalent.
		\begin{itemize}
			\item $\chi_B(G) \leq 3$
			\item $G\preceq_B (2^\omega ,E_s)$
			\item $\exists A \subseteq X$ Borel and $f$ independent such that $\forall x\in X$ $\exists k\in \omega$, $f^k(x) \in A$ (such a set is called \textit{forward recurrent}) \cite{FRec}
		\end{itemize}
	\end{fact}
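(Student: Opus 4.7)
The plan is to prove the three equivalences cyclically as $(3) \Rightarrow (2) \Rightarrow (1) \Rightarrow (3)$, so that each direction stays constructive and no hard auxiliary fact is re-proved in isolation. The implication $(3) \Rightarrow (2)$ is the cleanest: given a forward recurrent $f$-independent Borel set $A$, define $\phi : V \to 2^\omega$ by $\phi(x)(k) = \mathbf{1}_A(f^k(x))$. This is manifestly Borel and satisfies $\phi(f(x)) = s(\phi(x))$; forward recurrence of $A$ prevents $\phi(x) = 0^\omega$, while $f$-independence forbids two consecutive $1$s in $\phi(x)$ and in particular rules out $\phi(x) = 1^\omega$. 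So $\phi(x)$ is never a fixed point of $s$, and whenever $x E_f f(x)$ one has $\phi(x) E_s \phi(f(x))$, making $\phi$ a Borel homomorphism into $(2^\omega, E_s)$. The direction $(2) \Rightarrow (1)$ then reduces, by monotonicity of Borel chromatic number under $\preceq_B$, to $\chi_B\bigl((2^\omega, E_s)\bigr) \leq 3$, which is obtained by exhibiting an explicit $s$-forward-recurrent $s$-independent Borel subset of $2^\omega$ and running $(3) \Rightarrow (1)$ on the shift graph itself.

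The hard part will be $(1) \Rightarrow (3)$: turning a Borel $3$-colouring into a forward recurrent independent set. Each colour class $c^{-1}(i)$ is already $f$-independent, but on an orbit that misses one of the three colours entirely, no class is forward recurrent. The remedy I propose is to select, in a Borel and orbit-invariant way, a ``target colour'' genuinely realised along each forward orbit. Let
\[
    C^\infty(x) = \bigl\{ i \in \{0,1,2\} : \forall n\, \exists k \geq n,\ c(f^k(x)) = i \bigr\},
\]
set $\sigma(x) = \min C^\infty(x)$, and define $A = \{x \in V : c(x) = \sigma(x)\}$. The set $C^\infty(x)$ is $\Pi^0_2$-definable and hence Borel, nonempty by pigeonhole on the finite palette $\{0,1,2\}$, and constant on forward orbits because the forward orbit of $f(x)$ differs from that of $x$ only in dropping at most its first term. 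Consequently $\sigma$ is Borel and orbit-invariant, so $A$ is Borel.

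Forward recurrence of $A$ drops out because $\sigma(x) \in C^\infty(x)$ provides arbitrarily large $k$ with $c(f^k(x)) = \sigma(x) = \sigma(f^k(x))$, whence $f^k(x) \in A$; $f$-independence drops out because two adjacent points of $A$ would share the common colour $\sigma$, contradicting the properness of $c$. The point that needs care is that the ``infinitely often'' quantifier must behave uniformly across wildly different orbit types — strictly injective rays, pre-periodic tails feeding into finite cycles, and the cycles themselves — and keeping a single $\Pi^0_2$ definition of $C^\infty$ rather than splitting into cases is precisely what keeps the whole construction Borel without ad hoc bookkeeping at the boundary between tail and cycle.
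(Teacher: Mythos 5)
The paper does not prove this statement; it is stated as a Fact and attributed to the literature (Marks's notes, cited as \cite{FRec}), so there is no in-paper argument to compare against. Taken on its own merits, your proposal is essentially sound, and the $(1) \Rightarrow (3)$ construction via the invariant $C^\infty(x)$ and the choice $\sigma(x) = \min C^\infty(x)$ is the standard and correct way to extract a forward recurrent independent set from a finite Borel colouring. Two points, however, deserve flagging.

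First, in $(3) \Rightarrow (2)$ you assert that $f$-independence of $A$ forbids two consecutive $1$s in $\phi(x)$. This is false in general: if $y = f^k(x)$ is a fixed point of $f$ lying in $A$, then $\phi(x)(k) = \phi(x)(k+1) = 1$ with no independence violation, because $E_f$ is loop-free. What you actually need, and what does hold, is weaker and sufficient: whenever $f(x) \neq x$, the sequence $\phi(x)$ is not $s$-constant, since $\phi(x) = 0^\omega$ contradicts forward recurrence and $\phi(x) = 1^\omega$ forces $x, f(x) \in A$ with $\{x, f(x)\} \in E_f$, contradicting independence. Every edge of $G$ has the form $\{x, f(x)\}$ with $f(x) \neq x$, so this suffices for $\phi$ to be a homomorphism. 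The statement should be corrected, but the argument survives.

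Second, and more substantively, there is a circularity in $(2) \Rightarrow (1)$ as written. Your implication cycle is $(3) \Rightarrow (2) \Rightarrow (1) \Rightarrow (3)$, so $(3) \Rightarrow (1)$ is not available until the cycle has closed; yet you invoke \emph{running $(3) \Rightarrow (1)$ on the shift graph} in order to establish $\chi_B\bigl((2^\omega, E_s)\bigr) \leq 3$, which is itself the content of $(2) \Rightarrow (1)$. You must either prove $(3) \Rightarrow (1)$ directly as a standalone lemma (the usual parity argument: on the set where $f(x) \neq x$, set $n(x) = \min\{k : f^k(x) \in A\}$ and colour by $n(x) \bmod 2$, using a third colour on $A$ itself), or exhibit a Borel $3$-colouring of $(2^\omega, E_s)$ outright. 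With a concrete forward recurrent independent set such as $A_0 = \{x : x(0) = 1,\; x(1) = 0\}$ on the non-eventually-constant sequences, together with a separate two-colour handling of the countable, $s$-invariant set of eventually constant sequences, this is routine, but it does need to be written out rather than deferred to an implication you have not yet earned.
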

The universality property of $(2^\omega , E_s)$ is rather instrumental for our colouring algorithm. The goal of the algorithm is to embed as much as we can into $(2^\omega, E_s) $ in a Borel fashion, then work with the homogeneous remainder. The third property of forward recurrence will also be relevant. Consequently, familiarity with the above fact will be of high importance. Note, not every Borel graph generated by a function needs finite Borel chromatic number. The shift graph is a counterexample. 
\begin{definition}
	We call the graph $([\omega]^\omega, S)$ where $S(A) = A\setminus \text{min} A$ the \textit{shift graph}.
\end{definition}
\begin{fact}[Galvin-Prikry]
	Given a $k\in \omega$ and a Borel colouring $c: [\omega]^\omega \rightarrow \{1,...,k \}$, there is an $A\in[\omega]^\omega$ and $i\in \{1,...,k\}$ such that $[A]^\omega \subseteq c^{-1}(i)$.
\end{fact}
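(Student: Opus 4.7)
The plan is to reduce the statement to the Ramsey property of Borel sets: every Borel $X \subseteq [\omega]^\omega$ satisfies $[A]^\omega \subseteq X$ or $[A]^\omega \cap X = \emptyset$ for some infinite $A$. Granted this, the finite colouring case follows by successively applying the property to $c^{-1}(1), c^{-1}(2), \ldots, c^{-1}(k)$, obtaining a nested sequence $A_1 \supseteq \cdots \supseteq A_k$ of infinite sets where $[A_j]^\omega$ is either contained in or disjoint from $c^{-1}(j)$. Since the colours partition $[\omega]^\omega$, $[A_k]^\omega$ lands entirely in a single colour class $c^{-1}(i)$, giving the required homogeneous set.

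For the Ramsey property itself, I would adopt the Ellentuck topology on $[\omega]^\omega$, generated by basic sets $[s,A] = \{B \in [\omega]^\omega : s \subset B \subseteq s \cup A,\; \max s < \min A\}$ for $s \in [\omega]^{<\omega}$ and $A \in [\omega]^\omega$. This topology refines the pointwise convergence topology, so every ordinary Borel set is Borel in the Ellentuck topology. The first step is to show that Ellentuck-open sets are Ramsey via combinatorial forcing: for a given open $X$ define \emph{$A$ accepts $s$} to mean $[s,A] \subseteq X$ and \emph{$A$ rejects $s$} to mean no infinite $C \subseteq A$ accepts $s$. A pigeonhole-style argument shows every $s$ is decided by some infinite $B \subseteq A$, and a fusion over finite $s$ produces $A^\star$ deciding every finite subset of itself simultaneously; a Nash--Williams-style minimality argument then shows the decision at $s = \emptyset$ propagates to $[A^\star]^\omega$.

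The extension from open to Borel then follows because the Ramsey sets form a $\sigma$-algebra, closed under complementation by symmetry and under countable unions by an iterated fusion; equivalently, every set with the Baire property in the Ellentuck topology is Ramsey, and all analytic, hence Borel, sets have the Baire property. The main obstacle is the countable union step: witnesses for each $X_n$ individually do not amalgamate into a single $A$ homogeneous for $\bigcup_n X_n$, so one must perform a priority-style diagonalisation where at stage $n$ the current infinite approximation is refined to handle $X_n$ while preserving the decisions committed for $X_0, \ldots, X_{n-1}$, and one passes to the diagonal at the end. Once this technical core is in place the Ramsey property for Borel sets follows, and with it the stated fact.
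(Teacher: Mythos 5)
The paper does not prove this statement: it is quoted as a black-box theorem (with a citation to Galvin and Prikry) and used as a fact, so there is no in-paper argument to compare your sketch against. Your proposal is an essentially correct outline of the standard proof via the Ellentuck topology, with one terminological imprecision worth flagging. It is not the Ramsey sets that form a $\sigma$-algebra; the Ramsey sets are in fact \emph{not} closed under countable unions, which is exactly the obstacle you identify. The class that is a $\sigma$-algebra is the class of \emph{completely Ramsey} sets: $X$ is completely Ramsey if for every Ellentuck basic set $[s,A]$ there is an infinite $B\subseteq A$ with $[s,B]\subseteq X$ or $[s,B]\cap X=\emptyset$. This localised, uniform version of the dichotomy is what survives complementation (by symmetry) and countable unions (by fusion), and it is precisely what your ``priority-style diagonalisation'' is building; passing to the diagonal at the end only works because the decisions at each stage are made uniformly over all stems $s$, not just at $s=\emptyset$. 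Equivalently, and as you note, completely Ramsey coincides with having the Baire property in the Ellentuck topology (Ellentuck's theorem), and since Borel (indeed analytic) sets have the Baire property in any finer topology, the conclusion follows. With that correction, the remaining steps---combinatorial forcing (accept/reject) for Ellentuck-open sets, fusion for the $\sigma$-algebra closure, and the nested-witness reduction from $k$-colourings to a single Borel dichotomy---are sound and constitute the canonical Galvin--Prikry argument.
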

\begin{fact}
	$\chi_B(([\omega]^\omega,S  )) = \aleph_0$.
\end{fact}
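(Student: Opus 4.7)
The plan is to establish the equality by showing the two inequalities $\chi_B(([\omega]^\omega, S)) \leq \aleph_0$ and $\chi_B(([\omega]^\omega, S)) \geq \aleph_0$ separately. The upper bound will come from an explicit countable Borel colouring, and the lower bound will come from a direct application of the Galvin--Prikry fact stated above.

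For the upper bound, I would define $c: [\omega]^\omega \to \omega$ by $c(A) = \min A$. This map is continuous in the topology on $[\omega]^\omega$ (since the value of $\min A$ depends only on finitely many coordinates after fixing an enumeration), hence Borel, and it takes values in the countable set $\omega$, which is Polish. To see that $c$ is a proper colouring, note that for any edge $\{A, S(A)\}$ we have $\min S(A) = \min(A \setminus \{\min A\}) > \min A$, so $c(A) \neq c(S(A))$. This shows the shift graph admits a Borel homomorphism into the complete graph on $\omega$, giving $\chi_B \leq \aleph_0$.

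For the lower bound, suppose towards a contradiction that $\chi_B(([\omega]^\omega,S)) \leq k$ for some $k \in \omega$, witnessed by a Borel colouring $c: [\omega]^\omega \to \{1,\dots,k\}$. By the Galvin--Prikry fact, there exist $A \in [\omega]^\omega$ and a colour $i \in \{1,\dots,k\}$ with $[A]^\omega \subseteq c^{-1}(i)$. Now $A \in [A]^\omega$, and also $S(A) = A \setminus \{\min A\}$ is an infinite subset of $A$, so $S(A) \in [A]^\omega$. Consequently $c(A) = c(S(A)) = i$, contradicting the assumption that $c$ properly colours the edge $\{A, S(A)\}$.

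There is essentially no obstacle here beyond invoking the tools already displayed: the only thing worth double-checking is that $A \mapsto \min A$ is genuinely Borel on $[\omega]^\omega$ in both of the equivalent topologies mentioned earlier in the section, and that $S(A)$ really does lie in $[A]^\omega$, both of which are immediate. The argument also makes transparent \emph{why} the shift graph is the canonical obstruction to finite Borel chromatic number throughout the rest of the paper: any putative finite Borel colouring would be refuted by Ramsey-theoretic homogeneity on a single vertex plus its image under $S$.
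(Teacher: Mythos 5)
Your proof is correct and is exactly the argument the paper is compressing into the phrase ``simple application of the Galvin--Prikry theorem'': the lower bound is the Galvin--Prikry homogenization applied to a putative finite Borel colouring, and the upper bound is the evident $A\mapsto\min A$ colouring into $\omega$. No gaps.
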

\begin{proof}
	Simple application of the Galvin-Prikry theorem. 
\end{proof}
\begin{definition}
	Let $f:X\rightarrow X$ and $g:Y\rightarrow Y$ be functions. We call the mapping $h:X\rightarrow Y$ a \textit{factor map} if $\forall x\in X$, $h(f(x)) = g(h(x)) $.
\end{definition}
\begin{lemma}
	Let $X$ and $Y$ be Polish spaces. Let $f:X\rightarrow X$ and $g:Y\rightarrow Y$ be Borel with no fixed point. If $h: X \rightarrow Y$ is a Borel factor map, then $\chi_B((X,E_f) ) \leq \chi_B((X,E_g)) $.
\end{lemma}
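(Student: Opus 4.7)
The plan is to show directly that any Borel colouring of $(Y,E_g)$ pulls back along $h$ to a Borel colouring of $(X,E_f)$; then the inequality on Borel chromatic numbers is immediate by taking a colouring realising the minimum on the $Y$-side. So I would suppose that $\chi_B((Y,E_g)) = \kappa$, witnessed by a Polish space $Z$ with $|Z|=\kappa$ and a Borel graph homomorphism $c:Y\to Z$ into the complete graph $(Z,[Z]^2)$, and then argue that $c\circ h : X \to Z$ is itself such a homomorphism.

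The composition $c\circ h$ is Borel since both $c$ and $h$ are Borel. The content is the edge-preservation check. Take $x,x'\in X$ with $xE_f x'$. Then $x\neq x'$ and, without loss of generality, $f(x)=x'$. Using the factor-map identity gives
\begin{equation*}
h(x') \;=\; h(f(x)) \;=\; g(h(x)).
\end{equation*}
This is exactly the point where the hypothesis that $g$ has no fixed point is needed: because $g(h(x))\neq h(x)$, we conclude $h(x)\neq h(x')$, and together with $g(h(x))=h(x')$ this shows $h(x)\, E_g\, h(x')$ in $Y$. Since $c$ is a proper colouring of $(Y,E_g)$, it follows that $c(h(x))\neq c(h(x'))$, i.e.\ $\{c(h(x)),c(h(x'))\}\in[Z]^2$, which is the desired edge-preservation for $c\circ h$.

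Putting these together, $c\circ h$ is a Borel graph homomorphism from $(X,E_f)$ to $(Z,[Z]^2)$ with $|Z|=\kappa$, hence $\chi_B((X,E_f))\leq \kappa = \chi_B((Y,E_g))$. The only real obstacle is the conceptual one of spotting that the factor-map relation turns an $E_f$-edge into a $g$-orbit step in $Y$, and that fixed-point-freeness of $g$ (not $f$) is precisely what prevents this orbit step from collapsing; the no-fixed-point hypothesis on $f$ is not actually used in this direction and is presumably carried along only to ensure $(X,E_f)$ is itself a meaningful graph in the sense of the previous definitions.
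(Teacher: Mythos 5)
Your proof is correct and takes essentially the same approach as the paper: both arguments reduce to checking that $h$ is a Borel graph homomorphism from $(X,E_f)$ to $(Y,E_g)$ via the factor-map identity and fixed-point-freeness of $g$, then conclude by composing with (or citing transitivity to) a colouring of $(Y,E_g)$. Your side remark that fixed-point-freeness of $f$ is not actually needed is also correct, since the definition of $E_f$ already builds in $x\neq y$; the paper's invocation of that hypothesis at the analogous step is redundant.
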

\begin{proof}
	It suffices to show that $(X,E_f) \preceq_B (Y,E_g) $. The factor map $h:X\rightarrow Y $ is a graph homomorphism. To see this, take $x,y \in X$ with $x E_f y$. Since $f$ has no fixed point, either $f(x) =y$ or $f(y) = x$. Without loss of generality, suppose $f(x) =y$. It follows that $h(y) = h(f(x)) = g(h(x)) $. Since $g$ has no fixed point, $h(x) \neq h(y)$ and so it must be the case that $h(x) E_g h(y)$. Hence, $h$ is a Borel graph homomorphism.   
\end{proof}
Most of the graphs we will be interested in will have no fixed point. Note, the shift operation $S$ on $[\omega]^\omega$ has no fixed point. 
	\subsection{Basics of BQO Theory}
	\begin{definition}
		A \textit{quasi-order} is a pair $(Q,\leqq_Q)$ where the relation $\leqq_Q$ on $Q$ is reflexive and transitive. 
	\end{definition}
\begin{definition}
	Given two quasi-orders $Q_1$ and $Q_2$, we assign an order $\leqq_{Q_1\cup Q_2}$ to the disjoint union where $p \leqq_{Q_1\cup Q_2} q \iff \exists i \in \{1,2\}, p,q \in Q_i$ and $p\leqq_{Q_i} q$. 
\end{definition}
\begin{definition}
	We say a quasi-order $(Q,\leqq_Q)$ is a \textit{better-quasi-order} (BQO) if when $Q$ is endowed with the discrete topology, any Borel (equiv. continuous) map $f:[\omega]^\omega \rightarrow Q$, $\exists A\in [\omega]^\omega$, $\forall B\in [A]^\omega$, $f(B) \leqq_Q f(S(B))$. 
\end{definition}
A weaker notion is that of a \textit{well quasi-order}. They're much easier to work with, but do not code the type of graphs we are interested in. They're also not closed under as many operations as BQOs are. Note also the strong tie between the Galvin-Prikry theorem and the definition of BQO. 
\begin{definition}
	Given a BQO $Q$, we order $R= Q^{<\omega}$ via $t_1\leqq_R t_2 \iff \exists h:\text{dom}(t_1) \rightarrow \text{dom}(t_2) $ order preserving such that $t_1(n) \leqq t_2(h(n)) $. We call this order the \textit{Higman order}.
\end{definition}
\begin{fact}
	If $Q$ is a BQO, then $Q^{<\omega} $ is BQO under the Higman order. 
\end{fact}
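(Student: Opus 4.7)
The plan is to run a classical minimal-bad-array argument. Suppose toward contradiction that $Q^{<\omega}$ is not BQO, so there is a Borel $f:[\omega]^\omega \to Q^{<\omega}$ witnessing the failure: for every $A \in [\omega]^\omega$ there is $B \in [A]^\omega$ with $f(B) \not\leqq_R f(S(B))$. Applying the Galvin-Prikry theorem to the Borel $2$-coloring $A \mapsto \mathbf{1}[f(A) \not\leqq_R f(S(A))]$, the homogeneous infinite set must lie in colour class $1$ (otherwise the failure-witness condition would be contradicted on it), so after identifying $[A_0]^\omega \cong [\omega]^\omega$ we may assume $f$ is \emph{strongly bad}, meaning $f(A) \not\leqq_R f(S(A))$ for every $A$.

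I then invoke the standard minimal-bad-array lemma: among all strongly bad Borel arrays into $Q^{<\omega}$, choose $f$ to be minimal in the sense that no strongly bad Borel array $g:[A']^\omega \to Q^{<\omega}$ has $g(B)$ a proper suffix of $f(B)$ for every $B \in [A']^\omega$. Such an $f$ exists because the proper-suffix relation on $Q^{<\omega}$ is well-founded (finite sequences have finite length) and Borel bad arrays admit a diagonal-style minimization with respect to this rank.

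By strong badness, $f(A) \neq \emptyset$ for every $A$, since otherwise the empty sequence, the $\leqq_R$-minimum, would force $f(A) \leqq_R f(S(A))$. Write $f(A) = \langle h(A) \rangle \conc t(A)$ to obtain Borel maps $h:[\omega]^\omega \to Q$ and $t:[\omega]^\omega \to Q^{<\omega}$. Applying the BQO property of $Q$ to $h$ yields $A_0$ with $h(B) \leqq_Q h(S(B))$ for all $B \in [A_0]^\omega$. By minimality of $f$, the restriction $t\upharpoonright [A_0]^\omega$ cannot be strongly bad on any further subset, so Galvin-Prikry (applied to the $2$-colouring $A \mapsto \mathbf{1}[t(A) \not\leqq_R t(S(A))]$) produces $A_1 \in [A_0]^\omega$ with $t(B) \leqq_R t(S(B))$ for every $B \in [A_1]^\omega$. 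Sending position $0$ in $f(B)$ to position $0$ in $f(S(B))$ and composing with the tail embedding yields an order-preserving embedding of $\mathrm{dom}(f(B))$ into $\mathrm{dom}(f(S(B)))$ witnessing $f(B) \leqq_R f(S(B))$, contradicting the strong badness of $f$ on $[A_1]^\omega$.

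The main obstacle is the minimal-bad-array lemma itself. Its proof combines the well-foundedness of the proper-suffix rank on $Q^{<\omega}$ with a careful diagonalization over a descending chain of candidate Borel arrays, and requires verifying that the resulting minimal array is itself Borel and genuinely strongly bad. This descriptive-set-theoretic bookkeeping is the technical heart of the Nash-Williams/Marcone minimal-bad-array machinery, and is what makes BQO theory substantially more delicate than its finitary wqo counterpart; once it is in hand, the head/tail decomposition above closes the argument in a routine way.
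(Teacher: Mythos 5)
The paper states this result as a known fact and gives no proof of its own (it is the BQO form of Higman's theorem, classically due to Nash-Williams), so there is nothing in the text to compare against line by line; I can only judge your argument on its merits. Your route is the standard one: Galvin--Prikry to pass from ``not BQO'' to a strongly bad Borel array, the minimal-bad-array lemma with the proper-suffix partial ranking on $Q^{<\omega}$, peel off the head $h(A)=f(A)(0)$ and tail $t(A)$, apply the BQO property of $Q$ to $h$ and minimality to $t$, and recombine the two embeddings by sending $0\mapsto 0$ and shifting the tail embedding by one. That is exactly the proof found in the literature (Nash-Williams, Simpson, Marcone), and the steps you write out are correct: $f(A)\neq\emptyset$ because $\emptyset$ is $\leqq_R$-least, the proper-suffix relation is a well-founded partial order compatible with $\leqq_R$ (so it qualifies as a partial ranking), and the minimality form you state --- no bad sub-array $g$ on some $[A']^\omega$ with $g(B)$ a proper suffix of $f(B)$ pointwise --- is the one the MBA lemma delivers, so the contradiction with a strongly bad $t$ on $[A_1]^\omega$ is legitimate.

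Two cautions on presentation rather than substance. First, you phrase the minimization as if a globally least bad array existed (``among all strongly bad Borel arrays \dots choose $f$ minimal''); the MBA lemma actually produces, below any given bad array, a minimal one, so you should fix an arbitrary strongly bad $f_0$ first and then pass to a minimal bad $f$ with $f(B)$ a suffix of $f_0(B)$. Second, you are right that the MBA lemma is where all the descriptive-set-theoretic content lives, and since you invoke it as a named black box rather than reproving it, the argument is only as complete as that citation; for a paper like this one, which already cites Marcone and treats the fact as background, that is entirely appropriate.
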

\begin{definition}
	A \textit{tree} is a pair $(T,\leq_T)$ where $\leq_T$ is a partial order with the property that $\forall t \in T$, $\{v \in T: v \leq_T \}$ is well ordered under $\leq_T$, and $T$ has a $\leq_T$ minimal element called the \textit{root}.
\end{definition}
\begin{definition}
	Given a set $Q$, we define $\mathcal{FT}_Q$  to be the class of all pairs $(T,l)$ where $T$ is a finite tree and $l:T\rightarrow Q$. We call members of this class \textit{finite labeled trees} and refer to the function $l$ as a \textit{labeling}. 
\end{definition}
\begin{definition}
		Given a set $Q$, we define $\mathcal{T}_Q$  to be the class of all pairs $(T,l)$ where $T$ is a tree of height $\leq \omega $ and $l:T\rightarrow Q$. We call members of this class \textit{labeled trees} and refer to the function $l$ as a \textit{labeling}. 
\end{definition}
\begin{definition}
	Let $Q$ be a quasi-order. We define two quasi-orderings, $\leqq_1$ and $\leqq_m$ on $\mathcal{T}_Q$ via
	\begin{itemize}
		\item $(T_1,l_1) \leqq_1 (T_2, l_2) \iff$ $\exists f:T_1 \rightarrow T_2$, $\forall x,y \in T_1$, $x<_{T_1} y \Rightarrow f(x) <_{T_2} f(y)$ and $l_1(x) \leqq_Q l_2(f(x)) $.
		\item $(T_1,l_1) \leqq_m (T_2, l_2) \iff$ $\exists f:T_1 \rightarrow T_2$, $\forall x,y \in T_1$, $x\leq_{T_1} y \Rightarrow f(x) \leq_{T_2} f(y)$ and $l_1(x) \leqq_Q l_2(f(x)) $.
	\end{itemize} 
\end{definition}
The main distinction between $\leqq_1$ and $\leqq_m$ is $\leqq_1$ requires the function $f$ be injective. For $\leqq_m$, the function need only preserve the tree ordering.
\begin{fact}{(Laver)}
	If $Q$ is BQO, then $ \mathcal{T}_Q$ and $\mathcal{FT}_Q $ are BQO under $\leqq_1$ and $\leqq_m$. 
\end{fact}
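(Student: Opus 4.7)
The plan is to adapt the Nash--Williams minimal bad array argument, which is the standard route to Laver's theorem. Since $\leqq_1$ refines $\leqq_m$ (every strictly order-preserving injection also preserves $\leq_T$), it suffices to show $\mathcal{T}_Q$ and $\mathcal{FT}_Q$ are BQO under $\leqq_1$. I would begin with the finite case $\mathcal{FT}_Q$; the passage to $\mathcal{T}_Q$ will require a bit of additional care because the proper subtree relation is not well-founded on trees of height $\leq \omega$.

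Suppose for contradiction that $\mathcal{FT}_Q$ is not BQO under $\leqq_1$. Then some Borel map $f:[\omega]^\omega \to \mathcal{FT}_Q$ is \emph{bad}: for every $A \in [\omega]^\omega$ there is $B \in [A]^\omega$ with $f(B)\not\leqq_1 f(S(B))$. By Galvin--Prikry I can refine the domain to some $[A_0]^\omega$ on which badness is uniform. Among all such bad arrays I choose one that is minimal (in the Nash--Williams sense) with respect to the strict subtree relation $T' \prec T$, meaning $T'$ is a proper rooted subtree of $T$. For finite trees $\prec$ is well-founded, so minimal bad arrays exist.

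For each $X$, write $f(X)$ as a root carrying a label $r(X) \in Q$ together with the unordered collection of immediate subtrees $T_1(X), \ldots, T_{k(X)}(X)$, each of which is $\prec f(X)$. The map $X \mapsto r(X)$ is a Borel array into $Q$ and is therefore good on some $[A_1]^\omega$, $A_1 \subseteq A_0$, by BQO of $Q$. Now form a \emph{pooled subtree array} defined on a suitable front whose elements encode pairs $(X,i)$ with $1 \le i \le k(X)$ and value $T_i(X)$. Minimality forces this pooled array to be good in $\mathcal{FT}_Q$ under $\leqq_1$, since every value is $\prec$-strictly below $f$. A final Galvin--Prikry refinement aligns both goodness witnesses, yielding $A_2 \subseteq A_1$ such that for every $B \in [A_2]^\omega$ we have $r(B) \leqq_Q r(S(B))$ together with an injection $\sigma:\{1,\ldots,k(B)\}\to\{1,\ldots,k(S(B))\}$ witnessing $T_i(B) \leqq_1 T_{\sigma(i)}(S(B))$. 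Grafting $\sigma$ onto the root map produces a $\leqq_1$-witness for $f(B)\leqq_1 f(S(B))$, contradicting badness.

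The main obstacle is two-fold. First, the pooling step requires describing the subtree array as a map on a genuine front and converting its goodness back into a uniform matching on $[A_2]^\omega$; this is where Nash--Williams' front/barrier formalism is essential and is the technical heart of the argument. Second, the passage to $\mathcal{T}_Q$ requires replacing the naive subtree rank by a hereditary rank internal to each tree, exploiting that trees of height $\leq \omega$ are well-founded from the root, following Laver's original stratification. Both points are technical but standard in the Laver--Nash-Williams corpus, and constitute the bulk of the work; the step down to $\leqq_m$ is then free from the initial reduction.
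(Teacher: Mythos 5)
The paper does not prove this statement; it is presented as a cited Fact (attributed to Laver, with references to \cite{Laver1}, \cite{Laver2}, and implicitly \cite{Nash1}), so there is no internal argument to compare against. Your sketch follows the standard Nash--Williams minimal bad array route, which is indeed how this is established in the literature. The opening reduction is sound: since $\leqq_1$ is a finer relation than $\leqq_m$ on $\mathcal{T}_Q$, any bad array for $\leqq_m$ is a bad array for $\leqq_1$, so BQO under $\leqq_1$ gives BQO under $\leqq_m$ for free. The finite tree argument --- minimal bad array, split at the root, handle the label array by BQO of $Q$, handle the pooled immediate-subtree array by the forerunning lemma, then graft --- is the correct outline, with the front/barrier machinery correctly identified as the technical weight-bearing wall.

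The weak spot is your treatment of $\mathcal{T}_Q$. Saying the proper-subtree relation "is not well-founded" and proposing to fix it with a "hereditary rank internal to each tree" because "trees of height $\leq \omega$ are well-founded from the root" conflates two different things. The tree order $\leq_T$ being well-founded (every node has finitely many predecessors) does not yield any ordinal rank on the class of trees under the proper-subtree relation: a tree of height $\omega$ can be isomorphic to a proper rooted subtree of itself, and there is no rank function decreasing under the root-deletion decomposition. This is precisely the obstruction that forced Nash-Williams to invent better quasi-orders in the first place; the infinite tree theorem is not a routine rank induction layered on the finite case, but requires a substantially different structural decomposition (in \cite{Nash1}, via a careful analysis of "chains" of subtrees, or in Laver's approach via encoding into iterated powers). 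You have correctly located where the real work is, but the proposed repair as stated would not go through.
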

In the next subsection, we will briefly highlight results from Lavers work on $\sigma$-scattered orders \cite{Laver1} \cite{Laver2}. Codifying objects on labeled trees is one of the most natural ways to prove an order is a BQO. For example, consider Pouzet's theorem \cite{Pouzet}. 
\subsection{Basics of $\sigma$-scattered Orders}
\begin{definition}
	Given linear orders $L_1$ and $L_2$, we say $L_1 \leqq_{emb} L_2 $ if there exists a function $f:L_1\rightarrow L_2$,  $\forall x,y \in L_1$, $x<_{L_1}y \Rightarrow f(x) <_{L_2} f(y)$. We call such a map an \textit{embedding}. If $L_1 \leqq_{emb} L_2$, we say $L_2$ \textit{embeds a copy of} $L_1$. 
\end{definition}
\begin{definition}
	Given a linear order $L$, we define its \textit{reverse order} $L^*$ to be the pair $(L,<_{L^*})$ with $x<_{L^*}y \iff x<_{L} y$. 
\end{definition}
\begin{definition}
	A linear order is called scattered if it does not embed a copy of the rationals $\QQ$.
\end{definition}
\begin{definition}
	A cardinal $\kappa $ is called \textit{regular} if ever cofinal subset has cardinality $\kappa $. 
\end{definition}
For more on the basics of cardinals, see \cite{Kunen}.
\begin{definition}
	We define $\textbf{RC}$ to be the class of regular cardinals.
\end{definition}
Note that every regular cardinal is scattered. Moreover, $\textbf{RC}$ is a BQO under $\leqq_{emb}$.
\begin{definition}
	A linear order $L$ is called $\sigma$-scattered if it can be expressed as a countable union $L= \bigcup\limits_{n=1}^\infty L_n$ such that $L_n$ are scattered. 
\end{definition}
\begin{definition}
	A $ Q$-\textit{labeled} order is a pair $(L,l)$, where $L$ is a linear order and $l:L\rightarrow Q$. We call such an $l$ a labeling.  
\end{definition}
\begin{definition}
	Given a set $Q$ and a cardinal $\kappa$, $Q^{\kappa}$ is the set of labeled ordinals $\alpha$, where $\alpha< \kappa$.  
\end{definition}
\begin{definition}
	Let $Q$ be a quasi order. Given two labeled orders $(L_1,l_1)$ and $(L_2,l_2)$, we say $(L_1,l_1) \leqq_{emb} (L_2,l_2)$ if there is an embedding $f:L_1\rightarrow L_2$ with the property that $\forall x\in L_1 $ $l_1(x) \leqq_Q l_2(f(x))$.
\end{definition}
\begin{definition}
	Given a quasi-order $Q$, we define the class $\mathcal{C}(Q)$ to be the class of $Q$-labeled $\sigma$-scattered linear orders, ordered under $\leqq_{emb}$.
\end{definition}
\begin{fact}
	Given $\alpha, \beta \in \textbf{RC}$ uncountable, there is a unique linear order $\eta_{\alpha,\beta}$ that is $\leqq_{emb}$ maximal over the class of $\sigma$-scattered orders $L$ that do not embed $\alpha^*$ or $\beta$. 
\end{fact}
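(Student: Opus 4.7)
The statement splits into existence and uniqueness; I would treat uniqueness first since it is the cleaner half, then use the insights it reveals to handle existence.

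For uniqueness, suppose $L_{1}$ and $L_{2}$ are both $\leqq_{emb}$-maximal in the class $\mathcal{K}_{\alpha,\beta}$ of $\sigma$-scattered linear orders omitting $\alpha^{*}$ and $\beta$. I would form the ordered sum $L_{1}+L_{2}$, which is visibly $\sigma$-scattered as a finite sum of $\sigma$-scattered pieces. The key verification is that $L_{1}+L_{2}\in\mathcal{K}_{\alpha,\beta}$: any embedding of $\beta$ into $L_{1}+L_{2}$ partitions $\beta$ into a downward-closed ordinal $\gamma$ landing in $L_{1}$ and a tail landing in $L_{2}$, and since $\beta$ is a regular uncountable cardinal one of $\gamma$ or the tail has order type $\beta$---giving an embedded copy of $\beta$ inside $L_{1}$ or $L_{2}$, a contradiction. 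The argument for $\alpha^{*}$ is dual, using that $\alpha$ is regular uncountable so the reverse of any proper initial segment of $\alpha^{*}$ cannot have type $\alpha$, forcing the tail piece to. Maximality of $L_{1}$ then forces $L_{2}\leqq_{emb}L_{1}+L_{2}\leqq_{emb}L_{1}$, and symmetrically $L_{1}\leqq_{emb}L_{2}$, yielding uniqueness up to mutual embedding.

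For existence I would first establish that $\mathcal{K}_{\alpha,\beta}$ is a set rather than a proper class. The classical observation is that a $\sigma$-scattered order of regular cardinality $\lambda$ embeds either $\lambda$ or $\lambda^{*}$: by pigeonhole on a countable scattered decomposition, some scattered piece has full cardinality $\lambda$, and Hausdorff's analysis of scattered orders produces the embedding. Hence no $L\in\mathcal{K}_{\alpha,\beta}$ can have regular cardinality at least $\max(\alpha,\beta)$, and a cofinality reduction handles singular cardinalities, bounding $|L|$ by some fixed $\kappa=\kappa(\alpha,\beta)$. Combined with directedness from the sum construction, a Zorn-type argument now supplies a $\leqq_{emb}$-maximal element, which by the uniqueness paragraph is the desired unique $\eta_{\alpha,\beta}$.

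The main obstacle is verifying that chains in $\mathcal{K}_{\alpha,\beta}$ admit upper bounds. For a countable increasing chain the direct limit is straightforwardly $\sigma$-scattered and still omits $\alpha^{*}$ and $\beta$ by the regularity argument above, but for longer chains $\sigma$-scatteredness is not automatic. I would circumvent this by appealing to Laver's BQO theorem for $\sigma$-scattered orders to extract from any $\leqq_{emb}$-chain a cofinal sub-chain whose limit can be recognized as $\sigma$-scattered via a coherent choice of countable scattered decompositions, and then invoke regularity of $\alpha$ and $\beta$ once more to show any hypothetical embedded copy of $\alpha^{*}$ or $\beta$ in the limit must already be realized at some bounded stage, contradicting membership of that stage in $\mathcal{K}_{\alpha,\beta}$.
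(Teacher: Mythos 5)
This statement is one the paper cites to Laver without proof---it appears as a background ``Fact'' in Section~2.4 with a pointer to Laver's order type decomposition paper---so there is no proof in the paper for you to be compared against; I can only assess your argument on its own terms and against what Laver actually does.

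Your uniqueness half is essentially correct and is a clean argument: the key point that the class is closed under binary concatenation (directedness) does follow from regularity of $\alpha$ and $\beta$, and maximality plus directedness gives bi-embeddability of any two maximal elements. One small remark: you do not need full regularity for the $\beta$ split---any initial ordinal $\beta$ satisfies $\beta\setminus\gamma\cong\beta$ for $\gamma<\beta$---and your sentence about the ``reverse of any proper initial segment of $\alpha^*$'' has the pieces backwards (the initial segment of $\alpha^*$ landing in $L_1$ is $[\gamma,\alpha)^*$, which \emph{does} have type $\alpha^*$ when $\gamma<\alpha$; that is exactly what puts a copy of $\alpha^*$ inside $L_1$). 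The cardinality bound is also fine as a sketch: a scattered order of cardinality $\geq\mu$ with $\mu$ regular embeds $\mu$ or $\mu^*$ by induction on Hausdorff rank, and the pigeonhole/cofinality step promotes this to the $\sigma$-scattered case, so every member of $\mathcal K_{\alpha,\beta}$ has size below $\max(\alpha,\beta)$.

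The genuine gap is existence. Bounding the cardinality makes $\mathcal K_{\alpha,\beta}$ a set up to isomorphism, and directedness is established, but a directed quasi-order on a set can still fail to have a maximal element; Zorn requires upper bounds for \emph{chains}, and that is exactly what you cannot supply. A transfinite $\leqq_{emb}$-chain has no canonical limit (the embeddings are not canonical, different choices give non-equivalent direct limits), a direct limit of $\sigma$-scattered orders along uncountably many steps need not be $\sigma$-scattered, and BQO theory gives you well-foundedness and no infinite antichains but says nothing about closure under increasing limits---so ``extract a cofinal sub-chain with a coherent choice of scattered decompositions'' is not a repair, it is a restatement of the missing step. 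Laver's own route does not use Zorn at all: he constructs $\eta_{\alpha,\beta}$ explicitly as part of his structure theorem for $\sigma$-scattered orders (the decomposition into regular cardinals and these $\eta$-types), and universality/maximality is then read off from the construction rather than deduced abstractly. If you want a self-contained proof in the spirit you started, you would need to replace the chain-limit step with an explicit construction of a $\sigma$-scattered order of size $<\max(\alpha,\beta)$ that receives every member of $\mathcal K_{\alpha,\beta}$ while still omitting $\alpha^*$ and $\beta$; once such an order is exhibited, your uniqueness argument takes over.
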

 Hausdorff showed that regular cardinals served as the building blocks of the scattered linear orders. Laver showed via similar means that regular cardinals paired with the above orders serve as the building blocks for all $\sigma$-scattered orders. It is also interesting to not that $\eta_{\omega_1,\omega_1}$ is the rationals. 
\begin{definition}
	Given a quasi order $Q$, we define $Q^{+}$  to be the disjoint union $Q\cup \{\kappa:\kappa \in \textbf{RC} \}\cup \{\kappa^* : \kappa \in \textbf{RC} \}\cup \{\eta_{\alpha,\beta}: \alpha,\beta \in \textbf{RC} \} $.
\end{definition}
\begin{fact}[Laver]
	Given a better quasi order $Q$, there is a class of $\sigma$-scattered linear orders $\mathcal{H}(Q)$ and a mapping $J :\mathcal{H}(Q) \rightarrow \mathcal{T}_{Q^{+}}$ with the property that $\forall (L_1,l_1), (L_2,l_2) \in \mathcal{H}(Q)$, $ (L_1,l_1)\nleqq_{emb} (L_2,l_2) \Rightarrow J((L_1,l_1)) \nleqq_1 J((L_2,l_2))$. 
\end{fact}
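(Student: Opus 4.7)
The plan is to exhibit $J$ explicitly via the Hausdorff--Laver recursive decomposition of $\sigma$-scattered orders, and then verify the non-embeddability preservation by a parallel induction on rank. First, recall the Hausdorff--Laver condensation theorem: every $\sigma$-scattered linear order $L$ admits a rank, and for $L$ of positive rank there is a canonical condensation $L = \sum_{i \in I} L_i$ where the indexing order $I$ is one of the indecomposables $\kappa$, $\kappa^*$, or $\eta_{\alpha,\beta}$ (so $I \in Q^{+} \setminus Q$), and each summand $L_i$ is $\sigma$-scattered of strictly smaller rank. This furnishes the well-founded recursion.

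Second, define $J$ by recursion. Send a singleton labeled by $q \in Q$ to the one-node tree labeled by $q \in Q^{+}$. For $(L,l)$ of positive rank with canonical condensation $\sum_{i \in I} L_i$, let $J((L,l))$ be the tree whose root is labeled by $I \in Q^{+}$ and whose immediate subtrees are $\{J((L_i, l \restriction L_i))\}_{i \in I}$, with each child tagged by an auxiliary label in $Q^{+}$ encoding its position $i \in I$. Set $\mathcal{H}(Q)$ to be exactly those $(L,l) \in \mathcal{C}(Q)$ for which this recursion terminates in a tree of height at most $\omega$.

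For the preservation property, argue by contrapositive: assume $\phi$ witnesses $J((L_1,l_1)) \leqq_1 J((L_2,l_2))$, and construct a label-preserving embedding $\psi \colon L_1 \to L_2$ by induction on rank. At the root, the label $I_1 \in Q^{+}$ lies in a single summand of the disjoint union defining $Q^{+}$; since $\phi$ is a $\leqq_1$ embedding, the label $I_2$ at its image must lie in the same summand and satisfy $I_1 \leqq_{Q^{+}} I_2$. For the building-block summands this comparability is either equality or, in the $\eta_{\alpha,\beta}$ case, is witnessed by an order embedding coming from the universal/maximality property recorded in the preceding fact. For each $i \in I_1$, $\phi$ carries the child subtree at $i$ into the child subtree at some $j \in I_2$, and the inductive hypothesis supplies an embedding $L_{1,i} \hookrightarrow L_{2,j}$. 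Splicing these embeddings along the root-level embedding $I_1 \hookrightarrow I_2$ yields $\psi$.

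The main obstacle is the sibling-level coordination in the recursive step: a $\leqq_1$-embedding of trees does not automatically preserve any intrinsic sibling ordering, so converting a family of child embeddings into a genuine order embedding $L_1 \hookrightarrow L_2$ forces the tree encoding to retain enough combinatorial data at every node. The $\eta_{\alpha,\beta}$ case is the technical heart: one exploits the uniqueness of $\eta_{\alpha,\beta}$ as a maximal $\sigma$-scattered order omitting $\alpha^{*}$ and $\beta$ to show that comparability of $\eta$-type root labels forces a compatible embedding of indexing orders, and the auxiliary child-labels must be chosen so that the tree embedding descends to sibling maps respecting this structure. Once the recursion is set up correctly, the preservation of non-embeddability is forced by the usual rank-induction argument from Laver's work.
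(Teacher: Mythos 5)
The paper does not prove this statement; it is recorded as a \emph{Fact} attributed to Laver and used as imported background, with the reader referred to \cite{Laver1} and \cite{Laver2}. So there is no internal proof to compare against, and what you have written is a free-standing reconstruction of Laver's codification. With that understood, the reconstruction has a genuine gap that you in fact name but do not close.

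The crux is the sibling-coordination issue. The relation $\leqq_1$ on $\mathcal{T}_{Q^{+}}$ requires only that the injection $\phi$ preserve the strict tree order and the labels; it imposes no constraint on how $\phi$ treats incomparable nodes. Consequently, two children $i_1 \neq i_2$ of the root of $J((L_1,l_1))$ may be sent by $\phi$ to nodes lying in the \emph{same} branch of $J((L_2,l_2))$, i.e.\ into one and the same child-subtree, or into descendants at different depths. Your ``for each $i \in I_1$, $\phi$ carries the child subtree at $i$ into the child subtree at some $j \in I_2$'' is therefore unjustified, and even when it happens the induced map $i \mapsto j$ need not be order-preserving, so the splicing step does not produce an embedding $L_1 \hookrightarrow L_2$. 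Tagging children with ``auxiliary labels encoding position $i \in I$'' does not repair this: $Q^{+}$ contains regular cardinals and the orders $\eta_{\alpha,\beta}$, not arbitrary ordinals $i < \kappa$, and in any case $\leqq_1$ only compares labels via $\leqq_{Q^{+}}$, which does not transfer the sibling order. Laver's actual resolution is not a decoration of the tree but a restriction of the domain: $\mathcal{H}(Q)$ is the class of hereditarily (additively) indecomposable $Q$-labeled $\sigma$-scattered orders, for which indecomposability forces sufficient rigidity at each level of the condensation that tree comparability does yield order embeddings of the summands; the companion fact then says every member of $\mathcal{C}(Q)$ is a finite sum from $\mathcal{H}(Q)$. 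Your provisional definition of $\mathcal{H}(Q)$ as ``those orders for which the recursion gives a height-$\leq\omega$ tree'' misses this structural hypothesis, and without it the rank induction in the preservation step does not go through.
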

\begin{fact}[Laver]
	There is a map $J: \mathcal{C}(Q) \rightarrow \mathcal{H}(Q)^{<\omega}$ such that $(L_1,l_1) \nleqq_{emb} (L_2,l_2) \Rightarrow J((L_1,l_1)) \nleqq_{\mathcal{H}(Q)^{<\omega}} J((L_2,l_2))$. 
\end{fact}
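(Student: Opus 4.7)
The plan is to realize each $(L,l) \in \mathcal{C}(Q)$ as a canonical finite concatenation of ``simple'' $\sigma$-scattered pieces already living in $\mathcal{H}(Q)$, and to read off this decomposition as the finite sequence $J((L,l))$, so that the Higman order on $\mathcal{H}(Q)^{<\omega}$ certifies embeddability of the original orders.

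First, I would partition $L$ into convex blocks by an equivalence relation: declare $x \sim y$ if the convex hull of $\{x,y\}$ in $L$, viewed as a $Q$-labeled order, is $\leqq_{emb}$-embeddable into some member of $\mathcal{H}(Q)$. This is a convex equivalence relation; transitivity uses that $\mathcal{H}(Q)$ is closed under amalgamation of sufficiently compatible members, and each equivalence class is itself a member of $\mathcal{H}(Q)$ by a direct limit argument together with maximality of the block.

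Second, and this is the main structural step, I would argue that the resulting quotient $L/{\sim}$ is finite. The point is that $\mathcal{H}(Q)$ captures exactly those $\sigma$-scattered orders admitting a single uniform Hausdorff-style tree decomposition, i.e.\ those built over a single top-level dense-condensation type drawn from $Q^+$. Two adjacent $\sim$-classes must therefore differ in this top-level invariant, else they would amalgamate into a single $\mathcal{H}(Q)$-block, violating maximality. Using the maximality property of the $\eta_{\alpha,\beta}$ among $\sigma$-scattered orders avoiding $\alpha^*$ and $\beta$, together with a pigeonhole argument on the regular cardinals appearing as top-level invariants, one concludes that the number of adjacent invariant changes in a $\sigma$-scattered order is finite. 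Define $J((L,l))$ as the finite sequence of these blocks, read in order.

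Third, for the order-reflecting property I would argue by contrapositive. Given a Higman witness $h$ together with componentwise $\leqq_{emb}$-embeddings $\varphi_i$ of blocks of $L_1$ into blocks of $L_2$, the disjoint union $\bigsqcup_i \varphi_i$ is a well-defined order-preserving, $Q$-label-preserving injection $L_1 \to L_2$, since the blocks are convex and $h$ is strictly increasing. Hence $(L_1,l_1) \leqq_{emb} (L_2,l_2)$ and the desired contrapositive holds.

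The main obstacle is step two: the finiteness of the decomposition. If the partition into $\mathcal{H}(Q)$-blocks could be countably infinite, the construction would only produce a countable sequence and would fail to reduce $\mathcal{C}(Q)$ to $\mathcal{H}(Q)^{<\omega}$. The finiteness depends on a sharp dichotomy between $\mathcal{H}(Q)$ and $\mathcal{C}(Q)$: a $\sigma$-scattered order differs from a ``shell-homogeneous'' $\mathcal{H}(Q)$-order only by finitely many changes of top-level condensation type, a statement whose justification rests on Laver's classification of the $\eta_{\alpha,\beta}$ orders and the well-foundedness of $\mathbf{RC}$ under $\leqq_{emb}$.
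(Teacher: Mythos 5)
The paper states this as a Fact attributed to Laver and offers no proof of its own; the argument lives in \cite{Laver2} (and \cite{Laver1}). So there is no in-text proof to compare against, and what you are attempting is a reconstruction of a nontrivial chunk of Laver's structure theory for $\sigma$-scattered orders.

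You rightly flag the finiteness of the block decomposition as the main obstacle, and that gap is genuine --- it is essentially the whole theorem. But there is also a problem upstream of it. The paper treats $\mathcal{H}(Q)$ as a black box, introduced in the preceding Fact only as ``a class of $\sigma$-scattered linear orders'' admitting an order-reflecting map into $\mathcal{T}_{Q^+}$. Your equivalence relation $\sim$, and both of the structural properties you invoke (closure of $\mathcal{H}(Q)$ under amalgamation of compatible convex pieces, for transitivity; closure under direct limits, to put each $\sim$-class back inside $\mathcal{H}(Q)$), all depend on the actual definition of $\mathcal{H}(Q)$, which is never given and does not follow from the stated Fact. On finiteness itself, the pigeonhole on regular cardinals cannot be made to work as stated: a single $\sigma$-scattered order can involve infinitely many distinct regular cardinals in its Hausdorff-style decomposition, so counting ``top-level invariant changes'' gives no finite bound, and ``top-level condensation type'' is not a well-defined invariant at this level of generality. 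Laver's actual proof defines $\mathcal{H}(Q)$ concretely (as the indecomposable building blocks that arise in a rank analysis) and extracts the finite-sum decomposition by induction on that rank rather than by any pigeonhole. Your step three --- gluing block embeddings along the Higman witness into a single embedding of $L_1$ into $L_2$ --- is correct once the blocks exist and are convex, but producing those blocks is precisely the hard part, and it requires Laver's machinery rather than the sketch you offer.
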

These facts imply that if $Q$ is BQO, then so is the class of $Q$-labeled $\sigma$-scattered linear orders under $\leqq_{emb}$.  
	\subsection{Graphs Generated by BQO's}
	\begin{definition}
		Given a better quasi order $Q$, we define $\vec{Q} = \{X\in Q^\omega: \forall k\in \omega, X(k) \nleqq_Q X(k+1) \} $. We often conflate $\vec{Q} $ with the shift graph $(\vec{Q},S)$.
	\end{definition}
One property these graphs have is they do not homomorphically embed the shift graph $([\omega]^\omega, S )$ as a consequence of $Q$ being BQO \cite{Peq}. Pequignot began a search for a BQO $Q$ with the property that $\chi_B (\vec{Q}) = \aleph_0$. Since we're also interested in this property, we will give it a name. 
	\begin{definition}
		If $\chi_B(\vec{Q}) \leq 3$, we call $Q$ \textit{thin}. If $Q$ is not thin, we call it \textit{thick}.
	\end{definition}
	We will later show that the name ``thick" is rather appropriate when we discuss a corollary to the colouring algorithm. 
	\begin{lemma}
		Suppose $Q_1$ and $Q_2$ are BQO and there is a map $f:Q_1 \rightarrow Q_2$ with $q\nleqq_{Q_1} p\Rightarrow f(q) \nleqq_{Q_2} f(p)$. Then $Q_2$ thin $\Rightarrow Q_1$ thin. 
	\end{lemma}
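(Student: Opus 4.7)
The plan is to reduce this directly to the factor map lemma proved earlier. Given $f : Q_1 \to Q_2$ with the stated contrapositive property, I define the pointwise extension $\tilde{f} : \vec{Q_1} \to Q_2^{\omega}$ by $\tilde{f}(X)(k) = f(X(k))$. The first order of business is to check that $\tilde{f}$ lands in $\vec{Q_2}$: if $X \in \vec{Q_1}$, then by definition $X(k) \nleqq_{Q_1} X(k+1)$ for every $k$, and the hypothesis on $f$ then gives $f(X(k)) \nleqq_{Q_2} f(X(k+1))$, so $\tilde{f}(X)$ is a bad sequence in $Q_2$.

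Next I would verify that $\tilde{f}$ is in fact continuous (hence Borel): since $Q_1$ and $Q_2$ carry the discrete topology, $f$ is trivially continuous, and the pointwise extension to the product topology on sequence space is continuous. I would then check the factor map condition: for every $X \in \vec{Q_1}$ and every $k$,
\begin{align*}
\tilde{f}(S(X))(k) = f(X(k+1)) = \tilde{f}(X)(k+1) = S(\tilde{f}(X))(k),
\end{align*}
so $\tilde{f} \circ S = S \circ \tilde{f}$.

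To invoke the factor map lemma I also need $S$ to be fixed point free on each $\vec{Q_i}$. This is immediate: if $S(X) = X$ then $X(0) = X(1)$, which by reflexivity of $\leqq_{Q_i}$ contradicts the badness condition $X(0) \nleqq_{Q_i} X(1)$. Applying the earlier lemma, we conclude $\chi_B(\vec{Q_1}) \leq \chi_B(\vec{Q_2})$, and in particular if $Q_2$ is thin then $\chi_B(\vec{Q_1}) \leq 3$, i.e., $Q_1$ is thin.

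There is no substantive obstacle here; the content of the lemma is really that the definition of thinness was set up to interact cleanly with the factor map lemma, and the only thing to be careful about is making sure $\tilde{f}$ actually takes bad sequences to bad sequences, which is precisely what the contrapositive hypothesis on $f$ is designed to guarantee.
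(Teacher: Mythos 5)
Your proposal is correct and matches the paper's proof essentially line for line: define the pointwise extension of $f$, check it lands in $\vec{Q}_2$ using the contrapositive hypothesis, verify continuity and the factor map identity, and invoke the earlier factor map lemma. You are slightly more explicit than the paper in spelling out the factor map computation and in verifying that $S$ has no fixed point on $\vec{Q}_i$ (a hypothesis of the factor map lemma that the paper leaves tacit), but the argument is the same.
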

\begin{proof}
	Consider the map $\vec{f}:\vec{Q}_1 \rightarrow \vec{Q}_2  $ given by $\vec{f}(X)(k) = f(X(k))$ for all $k\in \omega$. Note, for all $k\in \omega$, $X(k) \nleqq_{Q_1} X(k+1) \Rightarrow f(X(k)) \nleqq_{Q_2} f(X(k+1)) $ so this mapping is well defined. It is also continuous as $X_n \rightarrow X$ point-wise in $\vec{Q}_1$ will imply that $\vec{f}(X_n) \rightarrow \vec{f}(X)$ point-wise. Moreover, it is easy to check that $\vec{f}$ is a factor map. It follows that $\vec{Q}_1 \preceq_B \vec{Q}_2 $.
\end{proof}
An immediate corollary to the above is the unsurprising fact that the thin property is hereditary. That is to say, if $Q$ is thin and $Q^\prime \subseteq Q$ is given the order it inherits from $Q$, then $Q^\prime$ is also thin. More interestingly, there has been historically many instances where maps of this sort are used to prove an ordering is BQO. For example, consider the previous facts of Laver from the last subsection. It also allows us to reasonably extend the notion of thin to larger BQO's that may not be sets, but rather classes.
\begin{definition}
	Given a BQO $Q$, we say $Q$ is thin if and only if $\forall  Q^\prime \in  [Q]^\omega$, $Q^\prime$ is thin.  
\end{definition}
Unless we are talking about a concrete BQO, we will always assume a thick or thin BQO is countable when we work in the abstract. 
	\section{Main Result}
	\subsection{Colouring Algorithm}
	We begin this section by proving the colouring algorithm. 
	\begin{lemma}[colouring algorithm]
		Let $Q$ be a better quasi order and $\Phi$ be a relation of arity $n$ on $Q$. $\vec{Q} $ is thin if and only if the following two properties hold
		\begin{itemize}
			\item  $ \{X \in \vec{Q}: \forall k\in \omega,\; \Phi(X(k),...,X(n+k)  ) \}$ is Borel $3$-colourable.
			\item  $ \{X \in \vec{Q}: \forall k\in \omega,\; \neg\Phi(X(k),...,X(n+k)  ) \} $ is Borel $3$-colourable.
		\end{itemize}
	\end{lemma}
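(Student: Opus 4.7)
The forward direction is immediate: if $\chi_B(\vec{Q}) \leq 3$, then any Borel $3$-colouring restricts to a Borel $3$-colouring of any Borel subgraph, and both sets in the statement are closed in $\vec{Q}$ since $Q$ carries the discrete topology (so each local predicate is continuous in $X$).

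For the backward direction, the plan is to invoke the forward-recurrence characterisation of $\chi_B \leq 3$ from the previous subsection. I would first define $\tau:\vec{Q} \to 2^\omega$ by $\tau_X(k) = 1$ iff $\Phi(X(k),\dots,X(n+k))$ holds. This is continuous because $\tau_X(k)$ depends on only finitely many coordinates of $X$ in the discrete topology on $Q$, and the identity $\tau_{S(X)}(k) = \tau_X(k+1) = s(\tau_X)(k)$ shows that $\tau$ is a Borel factor map from $(\vec{Q},S)$ to $(2^\omega, s)$. The two sets in the statement are precisely $A_\Phi := \tau^{-1}(\{1^\omega\})$ and $A_{\neg\Phi} := \tau^{-1}(\{0^\omega\})$, both closed and $S$-invariant.

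The crucial step is to partition $\vec{Q}$ into two Borel $S$-invariant pieces: $V^* := \tau^{-1}(\{w \in 2^\omega : w \text{ contains infinitely many } 0\text{s and } 1\text{s}\})$ and its complement $\vec{Q}\setminus V^* = \bigcup_{j\geq 0} S^{-j}(A_\Phi \cup A_{\neg\Phi})$, both plainly $S$-invariant. By the hypotheses combined with the forward-recurrence characterisation, choose Borel $S$-independent forward-recurrent sets $A_1 \subseteq A_\Phi$ and $A_2 \subseteq A_{\neg\Phi}$. Since every orbit in $\vec{Q}\setminus V^*$ eventually enters and remains in $A_\Phi \cup A_{\neg\Phi}$, the set $A_1 \cup A_2$ is forward-recurrent on $\vec{Q}\setminus V^*$. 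For $V^*$, set $A_3 := \{X \in V^* : \tau_X(0) = 1,\ \tau_X(1) = 0\}$; this is clopen in $V^*$, is $S$-independent because $X \in A_3$ forces $\tau_{S(X)}(0) = \tau_X(1) = 0$, and is forward-recurrent on $V^*$ because any sequence with infinitely many $0$s and $1$s contains the block ``$10$'' somewhere.

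To conclude, take $A := A_1 \cup A_2 \cup A_3$. This is Borel, $S$-independent (no $S$-edge can cross between $V^*$ and $\vec{Q}\setminus V^*$ because both are $S$-invariant, and between $A_1$ and $A_2$ the same argument applies), and forward-recurrent on all of $\vec{Q}$, so the characterisation yields $\chi_B(\vec{Q}) \leq 3$. The main obstacle I anticipate is exactly this gluing step: one must verify that the three independent sets do not sprout new edges when united, which is dispatched purely by the $S$-invariance of the partition $V^* \sqcup (\vec{Q}\setminus V^*)$ and of $A_\Phi, A_{\neg\Phi}$. Everything else is a mechanical application of the forward-recurrence characterisation and the factor-map lemma already in the paper.
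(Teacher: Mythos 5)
Your proof is correct and follows essentially the same route as the paper: you define the same continuous factor map into $2^\omega$, decompose $\vec{Q}$ into the preimage of the non-eventually-constant sequences ($V^*$, which the paper calls $f^{-1}(Y)$) and its complement, and glue forward-recurrent $S$-independent sets from the three $S$-invariant pieces. The only (minor, and arguably cleaner) difference is that where the paper handles $V^*$ indirectly by citing the factor-map lemma together with the $3$-colourability of $(2^\omega, E_s)$ and then reconverting to a forward-recurrent set, you exhibit the witness $A_3 = \{X \in V^* : \tau_X(0)=1,\ \tau_X(1)=0\}$ explicitly.
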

	\begin{proof}
		It is clear that if $\vec{Q}$ is thin, then the above two sets are Borel three colourable as they are both proper induced subgraphs of $\vec{Q}$. For this reason, we need only show the other direction.\\
		\\
		Consider the function $f: \vec{Q} \rightarrow 2^\omega$ via
		\begin{align*}
		f(X)(k) = 1 \iff \Phi(X(k),...,X(k+n) )
		\end{align*}
		This function is continuous and is a factor map eg. $f(S(X)) = s(f(X))$. Moreover, if we let $Y = \{ x \in 2^\omega: \forall k\in \omega\; \exists m\geq k, x(m) \neq x(k)  \}$ (the set of sequences which are never eventually constant), then we see $f^{-1}(Y) \subseteq \vec{Q}$ is Borel, closed under $S$ and is Borel three colourable by lemma 2.1. By fact 2.3, there is an $A_1\subseteq f^{-1}(Y)$ that is forward recurrent and $S$ independent. $\vec{Q} \setminus f^{-1}(Y)$ is the set of all sequences $X\in \vec{Q}$ that satisfy one of the following conditions. 
		\begin{itemize}
			\item $\exists m \in \omega$, $\forall j \geq m $, $S^j(X) \in \{X \in \vec{Q}: \forall k\in \omega,\; \Phi(X(k),...,X(n+k)  ) \} $
			\item  $\exists m \in \omega$, $\forall j \geq m $, $S^j(X) \in \{X \in \vec{Q}: \forall k\in \omega,\; \Phi(X(k),...,X(n+k)  ) \} $
		\end{itemize}
		By our hypothesis, $ \{X \in \vec{Q}: \forall k\in \omega\; \Phi(X(k),...,X(n+k)  ) \}$ and $ \{X \in \vec{Q}: \forall k\in \omega \;\neg\Phi(X(k),...,X(n+k)  ) \} $ admit forward recurrent $S$ independent sets $A_2$ and $A_3$ respectively. Since every $X \in \vec{Q}$ is either in $f^{-1}(Y)$ or eventually in $A_2$ or $A_3$, $A_1\cup A_2 \cup A_3$ is a witness to $\chi_B(\vec{Q})\leq 3$ as it is a Borel $S$ independent forward recurrent set. Note, independence is a consequence of each $A_i$ belonging to disjoint $S$ closed sets.   
	\end{proof}
Here are two simple applications of the lemma that will be of use later. 
\begin{lemma}
	If $ Q_1$ and $Q_2$ are thin, then the disjoint union $R=Q_1\cup Q_2$ is thin.
\end{lemma}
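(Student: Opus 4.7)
The plan is to invoke the colouring algorithm (the preceding lemma) with a carefully chosen binary relation $\Phi$ on $R$ that detects which of the two summands an element lies in. Specifically, I would define
\begin{align*}
\Phi(p,q) \iff \exists i \in \{1,2\},\; p,q \in Q_i,
\end{align*}
so that $\Phi(p,q)$ holds exactly when $p$ and $q$ belong to the same side of the disjoint union. Since the underlying partition $R = Q_1 \sqcup Q_2$ is clopen in $R$ (with its discrete topology), this $\Phi$ is Borel, and the two sets produced by the colouring algorithm are Borel subsets of $\vec{R}$ that are closed under $S$.

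Next I would handle the ``always $\Phi$'' set. If $X \in \vec{R}$ satisfies $\Phi(X(k),X(k+1))$ for every $k$, then by induction all entries of $X$ lie in a single $Q_i$, so
\begin{align*}
\{X \in \vec{R} : \forall k,\; \Phi(X(k),X(k+1))\} = \vec{Q}_1 \sqcup \vec{Q}_2
\end{align*}
as Borel sets. Each $\vec{Q}_i$ is Borel $3$-colourable by the thinness hypothesis on $Q_i$; gluing the two colourings along the clopen partition yields a Borel $3$-colouring of the union.

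For the ``never $\Phi$'' set I would exploit the fact that $\neg \Phi(X(k),X(k+1))$ for every $k$ forces the sequence $X$ to strictly alternate sides of the disjoint union. Then the function $c(X) = i$ where $X(0) \in Q_i$ is a continuous map into $\{1,2\}$, and since $S$ shifts the index of the initial term from one component to the other, we get $c(S(X)) \neq c(X)$. This is a Borel $2$-colouring, a fortiori a Borel $3$-colouring.

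Having established both required properties, the colouring algorithm delivers $\chi_B(\vec{R}) \leq 3$, i.e. $R$ is thin. There is no genuine obstacle here; the only point requiring care is the verification that $\Phi$ is Borel on $R$ and that the glued colouring in the first case is actually Borel, both of which follow immediately from the fact that $Q_1$ and $Q_2$ form a clopen partition of the discrete space $R$.
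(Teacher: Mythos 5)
Your proof is correct, but you use a different relation than the paper does, and the paper's choice is slicker. You take the binary relation $\Phi(p,q) \iff$ ``$p$ and $q$ lie in the same summand,'' which forces you to handle two cases: the $\Phi$-homogeneous sequences, which split as $\vec{Q}_1 \sqcup \vec{Q}_2$ and need a gluing argument, and the $\neg\Phi$-homogeneous sequences, which strictly alternate sides and need a separate explicit $2$-colouring $c(X) = i$ where $X(0) \in Q_i$. The paper instead uses the \emph{unary} relation $\Phi(q) \iff q \in Q_1$, under which the ``always $\Phi$'' set is literally $\vec{Q}_1$ and the ``never $\Phi$'' set is literally $\vec{Q}_2$; thinness of $Q_1$ and $Q_2$ then finishes the argument with no further work. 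Both are valid applications of the colouring algorithm, and everything you claim (Borelness of $\Phi$, the alternating structure of the $\neg\Phi$ set, the validity of the $2$-colouring) checks out, but the unary choice eliminates the alternating case entirely and avoids the need to argue that the glued colouring on $\vec{Q}_1 \sqcup \vec{Q}_2$ is Borel.
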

\begin{proof}
	Consider the unary relation $\Phi$ on $R$ given by $\Phi(q) \iff q \in Q_1$. By our colouring algorithm, it suffices to consider whether or not the subgraphs induced by the following subsets are Borel $3$-colourable.
	\begin{align*}
		B_1 &= \{X\in R: \forall k \in \omega, X(k) \in Q_1\}\\
		B_2 &= \{X\in R: \forall k \in \omega, X(k) \in Q_2 \}
	\end{align*}
	Notice however, that the first is simply $\vec{Q}_1$ and the second is $\vec{Q}_2$. As both $Q_1$ and $Q_2$ are thin, we are done. 
\end{proof}
Note, this implies that finite disjoint unions, and potentially non-disjoint unions, of thin BQOs are thin as well.
\begin{lemma}
	If $Q_1$ and $Q_2$ are thin, then $R= Q_1\times Q_2$ is thin.
\end{lemma}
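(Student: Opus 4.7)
The plan is to apply the colouring algorithm (Lemma 3.1) to $R=Q_1\times Q_2$ with a carefully chosen binary relation. Specifically, I would take $\Phi$ to be the arity-$2$ relation on $R$ defined by $\Phi((p_1,p_2),(q_1,q_2))\iff p_1 \nleqq_{Q_1} q_1$. The key observation driving the choice of $\Phi$ is that $X\in \vec{R}$ requires $X(k)\nleqq_R X(k+1)$ for every $k$, and since $\leqq_R$ is the coordinatewise order, at least one coordinate must fail to be comparable at each step; the relation $\Phi$ records whether it is the first coordinate that bears witness.

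Given this dichotomy, the colouring algorithm reduces the problem to Borel $3$-colouring the two induced subgraphs
\[
B_1 = \{X\in\vec{R} : \forall k\in\omega,\; \Phi(X(k),X(k+1))\}, \qquad B_2 = \{X\in\vec{R} : \forall k\in\omega,\; \neg\Phi(X(k),X(k+1))\}.
\]
The first step is to observe that for $X\in B_1$ the sequence of first coordinates lies in $\vec{Q}_1$, so the projection $\pi_1\colon B_1\to \vec{Q}_1$ defined by $\pi_1(X)(k)=X(k)_1$ is a well-defined continuous map. A direct check shows $\pi_1\circ S = S\circ \pi_1$, so $\pi_1$ is a factor map. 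Similarly, for $X\in B_2$ every step satisfies $X(k)_1\leqq_{Q_1} X(k+1)_1$, so the non-comparability in $\vec{R}$ is forced into the second coordinate, giving a continuous factor map $\pi_2\colon B_2\to\vec{Q}_2$.

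The shift has no fixed points on any of $\vec{R}$, $B_1$, $B_2$, $\vec{Q}_1$, or $\vec{Q}_2$ (since reflexivity would contradict membership in the respective sets). Applying Lemma 2.1 to each factor map together with the hypothesis that both $Q_1$ and $Q_2$ are thin gives
\[
\chi_B(B_1)\leq \chi_B(\vec{Q}_1)\leq 3, \qquad \chi_B(B_2)\leq \chi_B(\vec{Q}_2)\leq 3.
\]
The colouring algorithm then yields $\chi_B(\vec{R})\leq 3$, so $R$ is thin.

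I do not expect a serious obstacle here: the entire argument is essentially bookkeeping for an asymmetric dichotomy tailored to the product order. The only place where care is needed is verifying that the negation of $\Phi$ genuinely forces the second coordinate to witness non-comparability on $\vec{R}$, which is immediate from the definition of $\leqq_R$ and so should not cause trouble.
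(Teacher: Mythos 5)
Your proposal is correct and follows the paper's own argument: both apply the colouring algorithm with a binary relation testing comparability of the first coordinates, then project each homogeneous piece onto the coordinate that is forced to witness non-comparability. The only difference is that you take $\Phi$ to be $p_1\nleqq_{Q_1}q_1$ (the negation of the paper's choice $p_1\leqq_{Q_1}q_1$), which is cosmetic and in fact makes the indexing $\pi_i\colon B_i\to\vec{Q}_i$ line up cleanly.
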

\begin{proof}
	Consider the binary relation $\Phi$ on $R$ given by $\Phi((p_1,p_2), (q_1,q_2) ) \iff p_1 \leqq_{Q_1} q_1 $. Let $\pi_1: R\rightarrow Q_1 $ and $\pi_2: R\rightarrow Q_2$ be canonical projection maps ($\pi_i(p_1,p_2)=p_i$). By our colouring algorithm, we need only consider the colourability of the subgraphs generated by the sets
	\begin{align*}
		B_1 &= \{X\in \vec{R}: \forall k \in \omega \; \Phi(X(k), X(k+1) )  \}\\
		B_2 &= \{X\in \vec{R}: \forall k \in \omega \; \neg \Phi(X(k), X(k+1) )  \}
	\end{align*}
	Notice that if $(p_1,q_1) \nleqq_{R} (p_2,q_2) $, then either $p_1 \nleqq_{Q_1} q_1$ or $p_2\nleqq_{Q_2} q_2$. Consequently,  $\neg \Phi((p_1,p_2), (q_1,q_2) )$ and $(p_1,p_2) \nleqq_R (q_1,q_2) $  $\Rightarrow$ $p_2 \nleqq_{Q_2} q_2$. It follows that for $i\in \{1,2\} $ the mappings $\vec{\pi}_i: B_i \rightarrow \vec{Q}_i$ given by $\forall k \in \omega $ $ \vec{\pi}_i (X)(k) = \pi_i(X(k))$ is a well defined continuous factor map. Consequently, they are Borel graph homomorphisms and since $Q_1$ and $Q_2$ are thin, we are done. 
\end{proof}
We can also use the colouring algorithm to deduce a property of thick graphs that suggests the name is quite appropriate. First, note that being thick means that the relation $\nleqq_Q$ must be complex. Consequently, given a binary relation $\Phi\subseteq Q^2$, it is possible for the BQO $\leqq_Q \cup \Phi$ to no longer be thick. For example, if $\Phi = Q^2$. However, given the choice between $\Phi$ and its compliment $\neg \Phi$, one of $\leqq_Q \cup \Phi$ or $\leqq_Q \cup (\neg \Phi)$ must be thick. 
\begin{prop}
	Let $Q$ be thick. Let $\mathcal{I} = \{\Phi \subseteq Q^2 : \leqq_Q \cup \Phi \; \text{thick}  \}$. Then $\mathcal{I}$ satisfies the following properties:
	\begin{itemize}
		\item $\Psi \subseteq \Phi$, $\Phi\in \mathcal{I}$ $\Rightarrow $ $\Psi \in \mathcal{I}$.
		\item $Q^2 \notin \mathcal{I}$.
		\item $\emptyset \in \mathcal{I}$.
		\item $\Phi \notin \mathcal{I} \Rightarrow \neg \Phi \in \mathcal{I}$.  
	\end{itemize}
\end{prop}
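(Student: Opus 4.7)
The plan is to read the assertion ``$\leqq_Q \cup \Phi$ is thick'' as saying that the shift-subgraph supported on
\[
\vec{Q}_\Phi := \{X\in\vec{Q} : \forall k\in\omega,\ \neg\Phi(X(k),X(k+1))\}
\]
fails to be Borel $3$-colourable; a sequence $X\in Q^\omega$ avoids $\leqq_Q\cup\Phi$ at every consecutive pair precisely when $X\in\vec{Q}_\Phi$. Once this translation is in place, three of the four clauses become almost immediate and the fourth is a direct reading of the colouring algorithm.

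For the downward-closure clause, I would use the monotonicity $\Psi\subseteq\Phi \Rightarrow \vec{Q}_\Phi \subseteq \vec{Q}_\Psi$ (adding more to $\Phi$ only forbids more pairs, and so shrinks the set of bad sequences). Since the inclusion is as $S$-invariant Borel sets, any Borel $3$-colouring of $\vec{Q}_\Psi$ restricts to a Borel $3$-colouring of $\vec{Q}_\Phi$, forcing $\Phi\in\mathcal{I}\Rightarrow\Psi\in\mathcal{I}$. The two ``boundary'' clauses are instances of the same picture: $\vec{Q}_{Q^2}=\emptyset$ is trivially $3$-colourable, yielding $Q^2\notin\mathcal{I}$, while $\vec{Q}_\emptyset=\vec{Q}$ is not Borel $3$-colourable by the standing hypothesis that $Q$ is thick, yielding $\emptyset\in\mathcal{I}$.

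The substantive clause is the fourth, which I would prove by direct appeal to the colouring algorithm (Lemma 3.1) with $\Phi$ as the binary relation. The two pieces that appear there, $\{X\in\vec{Q}:\forall k,\ \Phi(X(k),X(k+1))\}$ and $\{X\in\vec{Q}:\forall k,\ \neg\Phi(X(k),X(k+1))\}$, are exactly $\vec{Q}_{\neg\Phi}$ and $\vec{Q}_\Phi$. Assuming toward contradiction that $\Phi\notin\mathcal{I}$ and $\neg\Phi\notin\mathcal{I}$, both $\vec{Q}_\Phi$ and $\vec{Q}_{\neg\Phi}$ are Borel $3$-colourable, so Lemma 3.1 produces a Borel $3$-colouring of $\vec{Q}$, contradicting thickness of $Q$.

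The only genuine point requiring care is the indexing bookkeeping in the last step: one must keep straight that ``$\Phi\in\mathcal{I}$'' corresponds to the $\neg\Phi$-half of the colouring algorithm being non-$3$-colourable, rather than the $\Phi$-half. Once that identification is made, the whole proposition is essentially a reformulation of Lemma 3.1 as the statement that $\mathcal{I}$ is a proper downward-closed subset of $\mathcal{P}(Q^2)$ containing $\emptyset$ and meeting $\{\Phi,\neg\Phi\}$ for every $\Phi\subseteq Q^2$.
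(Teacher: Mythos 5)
Your proof is correct and is exactly the argument the paper has in mind: the proposition is stated without a formal proof, but the surrounding text explicitly frames it as a consequence of the colouring algorithm, and your reduction of each clause to the sets $\vec{Q}_\Phi$ together with the application of Lemma~3.1 for the fourth clause is the intended reasoning. Your bookkeeping identifying $\vec{Q}_{\neg\Phi}$ and $\vec{Q}_\Phi$ with the two homogeneous pieces in Lemma~3.1 is also correct.
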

This suggests there is a link between thick BQOs and directed sets (eg. filters and ideals). 
	\subsection{From $Q$ to $Q^{<\omega}$}
	For this section, we will fix $Q$ to be a BQO and $R$ to be $Q^{<\omega}$ under Higman's order. We also let $B = \{X\in \vec{R} : \forall k \in \omega, \text{len}(X(k)) \leq \text{len}(X(k+1)) \} $. 
	\begin{definition}
		Given $X\in B$, we let $m_k(X) \in \omega$ be the largest integer such that $X(k) \upharpoonright m_k(X) \leqq_R X(k+1)$. We let $n_k(X)$ be the smallest integer such that $X(k) \upharpoonright m_k(X) \leqq_R X(k+1) \upharpoonright n_k(X)$.        
	\end{definition}
	Note, for any $X$ as in the above, $m_k(X) \leq n_k (X)$. Also, if $X_i \in B $ is a sequence that converges to $X$, then $\forall k \in \omega $ $n_k(X_i) \rightarrow n_k(X)$ and $m_k(X_i) \rightarrow m_k(X) $. Thus, both $n_k$ and $m_k$ are continuous for any $k$. They also satisfy $n_k(S(X)) = n_{k+1}(X)$ and $m_k(S(X))= m_{k+1}(X)$.
	\begin{definition}
		Given $X\in B$, if $\forall k \in \omega$, $m_k(X) \leq m_{k+1}(X) < n_k(X) \leq n_{k+1}(X)$, then we define $\partial X \in B $ by $\forall k \in \omega$, $\partial X(k) = X(k)\upharpoonright m_k$. This is well defined since $n_k$ was minimal and $m_{k+1}<n_k$. We call such $X$ \textit{$1$-derivable}. We say $\partial^0 X = X$.  Given $i\in \omega$, we recursively define $\partial^{i} X = \partial (\partial^{i-1} X)$ when $\partial^{i-1} X$ is $1$-derivable. We call such $X$ \textit{$i$-derivable}        
	\end{definition}
	Derivations commute with $S$ i.e $\partial^i S(X) = S(\partial^i X)$. This is a consequence of the previously noted fact $n_k(S(X)) = n_{k+1}(X)$ and $m_k(S(X))= m_{k+1}(X)$. It follows from $\partial$ commuting with $S$ that if $X$ is $i$-derivable, then $S(X)$ is $i$-derivable. 
	\begin{definition}
		We let $D\subseteq B $ be the set of all $1$-derivable members from $B$. 
	\end{definition}
The mapping $\partial : D \rightarrow \vec{R}$ is a continuous factor map. 
	\begin{prop}
		$\forall X\in B$, there is a maximal $i$ such that $X$ is $i$-derivable. 
	\end{prop}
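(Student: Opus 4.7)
The plan is an infinite descent argument on the length of the $0$-th entry. The key monotonicity claim is: whenever $Y \in B$ is $1$-derivable, $\text{len}(\partial Y(0)) < \text{len}(Y(0))$. This holds because $\partial Y(0) = Y(0)\upharpoonright m_0(Y)$ has length $m_0(Y)$, and $m_0(Y) < \text{len}(Y(0))$ follows from $Y \in \vec{R}$: the full sequence $Y(0)$ does not embed into $Y(1)$, so the largest prefix of $Y(0)$ that does must be strictly shorter than $Y(0)$ itself.

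From here the proposition follows quickly. If $X$ were $i$-derivable for every $i \in \omega$, then each $\partial^i X$ lies in $B$ and is $1$-derivable, and applying the monotonicity claim to $Y = \partial^i X$ yields an infinite strictly decreasing chain $\text{len}(X(0)) > \text{len}(\partial X(0)) > \text{len}(\partial^2 X(0)) > \cdots$ in $\omega$, which is absurd. Hence the set of $i$ witnessing $i$-derivability of $X$ is a bounded initial segment of $\omega$ and admits a maximum; in fact the bound is at most $\text{len}(X(0))$.

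The one point that needs some care is verifying that $\partial X \in \vec{R}$ (indeed $\partial X \in B$) whenever $X$ is $1$-derivable, so that the descent can iterate. For this I rely on the minimality built into $n_k$: the $1$-derivability hypothesis $m_{k+1}(X) < n_k(X)$ together with this minimality forces $X(k)\upharpoonright m_k \nleqq_R X(k+1)\upharpoonright m_{k+1}$, which is exactly $\partial X(k) \nleqq_R \partial X(k+1)$; the monotonicity of the $m_k$ yields the non-decreasing length condition, so $\partial X \in B$ and the induction can be carried out. Beyond this bookkeeping I do not foresee any real obstacle.
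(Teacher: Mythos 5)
Your proposal is correct and follows essentially the same approach as the paper: the paper also observes that $\text{len}((\partial^i X)(0)) = m_0(\partial^{i-1}X)$ must strictly decrease, yielding an impossible infinite descending sequence in $\omega$. You simply spell out the two supporting facts that the paper leaves implicit (that $m_0(Y) < \text{len}(Y(0))$ because $Y(0) \nleqq_R Y(1)$, and that $\partial X \in B$ so the descent can iterate), both of which are checked correctly.
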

	\begin{proof}
		Suppose otherwise. Note $\text{len}((\partial^i X) (0) ) = m_0( \partial^{i-1} X )$ decreases in length. We then have an infinite decreasing sequence in $\omega$ which is impossible. 
	\end{proof} 
	\begin{definition}
		For an $X\in B$, let $M_k(X) \in \omega $ be the largest such ordinal such that $S^k(X)$ is $M_k(X)$-derivable. We define $\partial^{\infty} X $ by $\partial^\infty X(k) = (S^k(X))^{(M_k)} (0)$.
	\end{definition}
	Note that for every $k\in \omega $, the mappings $M_k(X) $ and $\partial^{M_k(X)} X$ are continuous.  
	\begin{lemma}
		The mapping $X\in B$ defined by $X\rightarrow \partial^\infty X$ is well defined and Baire class 1. Moreover, $\forall k \in \omega$ $S^k(\partial^\infty X)\in B\setminus D $
	\end{lemma}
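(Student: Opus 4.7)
My plan handles the three assertions of the lemma in turn: well-definedness, the shift claim $S^k(\partial^\infty X)\in B\setminus D$, and Baire class~1.

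For well-definedness, I would invoke the preceding proposition, which guarantees that $M_k(X)\in\omega$ is a finite natural number for every $k$ and every $X\in B$. Hence $\partial^{M_k(X)}S^k X$ is a legitimate member of $B$, and by maximality of $M_k$ it actually lies in $B\setminus D$. Its $0$-th coordinate then defines $\partial^\infty X(k)\in R$, and collecting these gives a point of $R^\omega$. To upgrade to $\partial^\infty X\in\vec R$ I would verify that consecutive coordinates are $\leqq_R$-incomparable using that $\partial^{M_k}S^k X\in\vec R$ together with the commutation $\partial S=S\partial$ noted earlier in the section.

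For the second claim, observe $S^k(\partial^\infty X)(j)=\partial^\infty X(k+j)=\partial^{M_{k+j}}S^{k+j}X(0)$. Membership in $B$ (non-decreasing lengths) is established by tracking that $\partial$ only shortens the $0$-th coordinate of a sequence and that $\partial^{M_{k+j}}S^{k+j}X\in B$ for each $j$, then comparing $|\partial^{M_{k+j}}S^{k+j}X(0)|$ across consecutive $j$ by casing on whether $M_{k+j+1}\ge M_{k+j}$ and invoking $\partial S=S\partial$. The non-$1$-derivability half is cleaner and argued by contradiction: if $S^k(\partial^\infty X)$ were $1$-derivable, then one further $\partial$-step would shorten $\partial^\infty X(k)=\partial^{M_k}S^k X(0)$, which, via $\partial S=S\partial$, would translate into producing a derivation of $\partial^{M_k}S^k X$ itself, contradicting the maximality encoded in $M_k(X)$.

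For Baire class~1 I would define continuous approximants $g_n:B\to R^\omega$ by
\[
g_n(X)(k)=\partial^{\min(n,M_k(X))}S^k X(0),
\]
so that $g_n(X)(k)=\partial^\infty X(k)$ once $n\ge M_k(X)$, and hence $g_n\to\partial^\infty$ pointwise. Continuity of each $g_n$ draws on the observations just before the lemma: on the closed stratum where $S^k X$ is $n$-derivable, $g_n(\cdot)(k)$ coincides with the continuous map $X\mapsto\partial^n S^k X(0)$; off this stratum $M_k(X)<n$ and $g_n(\cdot)(k)$ coincides with $X\mapsto\partial^{M_k(X)}S^k X(0)$, whose continuity is precisely what was recorded for $M_k$ and $\partial^{M_k(X)}X$. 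Pointwise convergence of $g_n$ to $\partial^\infty$ then gives the Baire class~1 conclusion.

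The main obstacle I anticipate is verifying that the two pieces defining $g_n$ truly glue into a continuous map across the boundary $\{X:M_k(X)=n\}$; the cap in $\min(n,M_k(X))$ creates a potential discontinuity there, and the argument must exploit the locally constant nature of the $m_k$ and $n_k$ (each depending on only finitely many coordinates of $X$) to match values on either side. Once this continuity is in place, the remaining work is essentially bookkeeping on lengths and repeated use of $\partial S=S\partial$.
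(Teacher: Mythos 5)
Your plan matches the paper's proof closely: well-definedness via the preceding proposition together with commutation of $\partial$ with $S$ and an initial-segment argument for incomparability of consecutive coordinates; the $B\setminus D$ claim by contradiction with the maximality of $M_k$; and Baire class~1 via continuous approximants obtained by truncating the derivation level. The paper's approximant $f_j$ uses the exact level $M_i(X)$ for $i<j$ and the cap $M_j(X)$ for $i\geq j$, while your $g_n$ caps uniformly at $\min(n,M_k(X))$; both rely on the local constancy of $M_k$ (it is $\omega$-valued and continuous) for continuity, a point you correctly flag as needing care and which the paper leaves implicit.
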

	\begin{proof}
		First, since $X$ $i$-derivable $\Rightarrow $ $S(X)$ is $i$-derivable, $M_k(X)$ is an increasing sequence. Moreover, $S^{k}(\partial^{M_{k-1}(X)} X)$ is an initial segment of $S^{k}(\partial^{M_{k}(X) } X) $. Consequently, $\partial^\infty X(k) = (S^{(k)}(\partial^{M_k(X)}X))(0)\nleqq_R (S^{(k+1)}(\partial^{M_k(X)}X))(0)$ implies that $\partial^\infty X (k) \nleqq_R \partial^\infty X (k+1)$ as $\partial^\infty X (k+1) $ is an initial segment of $(S^{(k+1)}(\partial^{M_k(X)} X))(0)$. \\
		\\
		To see the mapping $X\rightarrow \partial^\infty X$ is Baire class 1, consider the mappings $f_j :\vec{R}\rightarrow \vec{R}$ via
		\begin{align*}
		\forall i<j, f_j(X)(i) &= (S^i(\partial^{M_i(X) } X)) (0)\\
		f_j(X)(i)&= (S^i(\partial^{M_j(X) } X)) (0) \; \text{otherwise}
		\end{align*}
		It is clear that these maps are continuous as finite derivations are continuous. It is also clear that $f_k(X) \rightarrow \partial^\infty X$ point-wise.  \\
		\\
		Note that $S(\partial^{\infty } X)= \partial^{\infty }S(X)  $ as derivation commutes with $S$. The existence of a $k$ such that $S^k(\partial^{\infty } X ) \in D$  would contradict the maximality of $M_k(X)$ as we could have taken one further derivation. 
	\end{proof}
	\begin{lemma}
		Given an $X\in B$, if $m_{k+1}(X) \geq n_k(X)$, the sequence $Y(k) = X(k)(m_k(X)+1) $ is in $\vec{Q}$.
	\end{lemma}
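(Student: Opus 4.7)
The plan is to argue by contradiction. Suppose toward a contradiction that $Y(k) \leqq_Q Y(k+1)$ for some $k \in \omega$, that is, $X(k)(m_k+1) \leqq_Q X(k+1)(m_{k+1}+1)$. My strategy is to construct a Higman embedding of $X(k) \upharpoonright (m_k+1)$ into $X(k+1)$, which directly contradicts the maximality of $m_k$ in its defining property.

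To build such an embedding, I would first invoke the minimality of $n_k$ to fix an order-preserving injection $h$ witnessing $X(k) \upharpoonright m_k \leqq_R X(k+1) \upharpoonright n_k$; in particular every index appearing in the image of $h$ is bounded by $n_k$. The standing hypothesis $m_{k+1}(X) \geq n_k(X)$ then yields $m_{k+1}+1 > n_k$, so the position $m_{k+1}+1$ in $X(k+1)$ lies strictly beyond every index used by $h$.

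Next I would extend $h$ to an order-preserving injection on $\text{dom}(X(k) \upharpoonright (m_k+1))$ by sending the new position $m_k+1$ to $m_{k+1}+1$ in $\text{dom}(X(k+1))$. Order-preservation of the extension holds by the previous step, and the only new Higman comparison to verify is $X(k)(m_k+1) \leqq_Q X(k+1)(m_{k+1}+1)$, which is precisely our contradiction hypothesis. This exhibits $X(k) \upharpoonright (m_k+1) \leqq_R X(k+1)$, contradicting the maximality of $m_k$, so $Y \in \vec{Q}$. The only bookkeeping is to confirm that $m_k+1$ and $m_{k+1}+1$ are valid indices in $X(k)$ and $X(k+1)$; this follows from $X \in \vec{R}$, since $X(k) \nleqq_R X(k+1)$ together with $X(k) \upharpoonright \text{len}(X(k)) = X(k)$ forces $m_k < \text{len}(X(k))$, and similarly at $k+1$. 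I do not anticipate any genuine obstacle — the content is essentially an unwrapping of the defining extremality properties of $m_k$ and $n_k$ under the hypothesis $m_{k+1} \geq n_k$, which is exactly the quantitative slack needed to attach one more position to the witnessing embedding.
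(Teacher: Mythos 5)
Your argument is correct and is essentially the paper's proof: both hinge on the observation that a comparison $Y(k)\leqq_Q Y(k+1)$ together with $m_{k+1}(X)\geq n_k(X)$ would let you append one more point to the embedding witnessing $X(k)\upharpoonright m_k \leqq_R X(k+1)\upharpoonright n_k$, thereby contradicting the maximality of $m_k$. The only cosmetic difference is that the paper first records the general fact that $X(k)(m_k+1)\nleqq_Q X(k+1)(d)$ for all $d\geq n_k$ and then specializes to $d=m_{k+1}+1$, whereas you work directly with that single index; your extra bookkeeping remark about index validity is a reasonable (if informal) addition not present in the paper.
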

	\begin{proof}
		Note, for all $d\geq n_k(X)$, $X(k)(m_k(X)+1) \nleqq_Q X(k+1)(d)$. Else, the existence of such a $d$ would mean $X(k)\upharpoonright m_k(X)+1 \leqq_R X(k+1) \upharpoonright  d \leqq_R X(k+1)$. Since $m_k(X) \leq n_k(X) \leq  m_{k+1}(X)$, $\forall k \in \omega $ $ X(m_k(X)+1) \nleqq_Q X(m_{k+1}(X) +1) $ as desired. 
	\end{proof} 
	\begin{theorem}
		If $Q$ is thin, then so is $Q^{<\omega}$.
	\end{theorem}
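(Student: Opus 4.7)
The plan is to iterate the colouring algorithm on nested Borel, $S$-closed subsets of $\vec{R}$, peeling off cases until each remaining piece either factors continuously onto $\vec{Q}$ (where thinness of $Q$ closes the argument) or collapses to the empty set by well-foundedness of $\omega$. The algorithm's proof only uses that the ambient is a Borel, $S$-closed set admitting a continuous factor map to $2^\omega$, so the same proof works verbatim on any such subset of $\vec{R}$; I will freely use this strengthened form throughout.

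First I would run the algorithm on $\vec{R}$ with the binary relation $\Phi_0(p,q)\iff \text{len}(p)\leq\text{len}(q)$. The side where $\neg\Phi_0$ holds at every index is empty, since it would produce an infinite strictly decreasing chain in $\omega$, while the side where $\Phi_0$ holds at every index is exactly $B$; hence it suffices to Borel $3$-colour $B$. The Baire class $1$ lemma then shows that $\partial^{\infty}$ is a Borel factor map from $B$ into $E:=\bigcap_{k}S^{-k}(B\setminus D)$; since $S$ is fixed-point free on both domains, Lemma 2.1 reduces colouring $B$ to colouring $E$.

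On $E$ I would apply the colouring algorithm with the ternary relation $\Phi_1(p,q,r)\iff m(q,r)\geq n(p,q)$, in the notation of the derivation definitions. The side where $\Phi_1$ holds at every index meets the hypothesis of the $\vec{Q}$-extraction lemma (Lemma 3.3), which supplies a continuous factor map into $\vec{Q}$; thinness of $Q$ then $3$-colours this piece. On the other side we have $m_{k+1}(X)<n_k(X)$ for all $k$, and I would apply the algorithm again with $\Phi_2(p,q,r)\iff m(p,q)\leq m(q,r)$: the $\neg\Phi_2$ branch would produce $m_0>m_1>\cdots$ in $\omega$, hence is empty. On the residue, where $m_k\leq m_{k+1}$ for all $k$, a final application with $\Phi_3(p,q,r)\iff n(p,q)\leq n(q,r)$ again kills the $\neg\Phi_3$ branch by well-foundedness, while in the $\Phi_3$ branch we accumulate $m_k\leq m_{k+1}<n_k\leq n_{k+1}$ for every $k$, i.e.\ $X\in D$, contradicting $X\in E$. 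Every terminal piece is empty, so $E$ is Borel $3$-colourable and unwinding the chain of reductions delivers the theorem.

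The main obstacle is the bookkeeping at the transition from $B$ to $E$: one must confirm that $\partial^{\infty}$ really lands in $E$, is a continuous (Baire class $1$) factor map for $S$, and that $S$ is fixed-point free on both spaces so that Lemma 2.1 applies. After that is granted, the proof is a tree of four applications of the colouring algorithm whose leaves are closed either by Lemma 3.3 together with thinness of $Q$ or by the nonexistence of infinite strictly decreasing chains in $\omega$; no further analytic input is needed.
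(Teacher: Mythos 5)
Your proposal is correct and follows essentially the same route as the paper: the length relation to reach $B$, the Baire class $1$ map $\partial^\infty$ to pass to the shift-closed non-derivable core, and the relations comparing $m_k$, $m_{k+1}$, $n_k$, $n_{k+1}$ to either extract a continuous factor map to $\vec{Q}$ (Lemma~3.5, which you cite as 3.3) or kill the remaining pieces by well-foundedness of $\omega$ and the impossibility of a derivable member of $B\setminus D$. The only divergences are cosmetic: you apply the $m_{k+1}\geq n_k$ split before the monotonicity splits rather than after, and you correctly record the target of $\partial^\infty$ as $E=\bigcap_k S^{-k}(B\setminus D)$, which is the $S$-closed set the paper means when it writes $B\setminus D$.
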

\begin{proof}
	By our colouring algorithm, it suffices to Borel $3$-colour the set $B$. To see this, consider the relation $\Phi(r,t) \iff \text{len}(r) \leq \text{len}(t)$. Since there is no infinite descending sequence in $\omega$, we only need to worry about the $\Phi$ homogeneous members in $\vec{R}$ which is the set $B$. By lemma 3.4 and lemma 2.1, the mapping $\partial^\infty : D\rightarrow B\setminus D$ is a Borel graph homomorphism. Hence, it suffices to consider $B\setminus D$. Applying our algorithm twice, we can reduce colourability to the set of $X\in B\setminus D$ that have the parameters $m_k(X) $ and $n_k(X)$ increasing.
	\begin{align*}
	C &= \{X\in B\setminus D: \forall k \in \omega, m_k(X) \leq m_{k+1}(X), n_k(X) \leq n_{k+1}(X)  \}
	\end{align*} 
	Again using the algorithm, we can reduce to the case where $\forall k \in \omega$ $m_{k+1}(X) \geq n_k(X) $ or $\forall k \in \omega$, $m_{k+1}(X) <n_k(X)$. Note, the latter is impossible for members of $C$, else such an $X$ would be in $D$. By lemma 3.5, $\{X\in C: m_{k+1}(X) \geq n_k(X) \}\preceq_B \vec{Q}$ by the mapping $f(X)(k) = X(m_k(X) +1)$. Since $Q$ is thin, we have Borel $3$-colouring of $\vec{R}$. Thus, $Q$ thin implies $Q^{<\omega}$ thin.
\end{proof} 
	\begin{theorem}
		If $Q$ is thin, then so is $\mathcal{FT}_Q$ under $\leqq_1$.
	\end{theorem}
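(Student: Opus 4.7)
The plan is to prove this theorem by induction on tree height using the colouring algorithm (lemma 2.1) and theorem 3.1 as the engine, then extending from the bounded-height case to the full class by another application of the colouring algorithm. Write $\mathcal{FT}_Q^{\leq h}$ for the sub-class of trees of height at most $h$.

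For the induction, the base case $h=0$ identifies single-node trees with $Q$ under the root-label correspondence. For the inductive step I would assume $\mathcal{FT}_Q^{\leq h}$ is thin and apply the colouring algorithm to $\vec{\mathcal{FT}_Q^{\leq h+1}}$ with the unary predicate $\Phi(T) \iff h(T) = h+1$. The $\neg\Phi$-homogeneous piece lies inside $\vec{\mathcal{FT}_Q^{\leq h}}$ and is thin by the inductive hypothesis. For the $\Phi$-homogeneous piece the key geometric observation is that any $\leqq_1$ embedding between two trees of the same height $h+1$ must be root-faithful: if the root of the source mapped to a node at positive depth in the target, the image would require strictly more height than the target has. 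Under this rigidity, $\leqq_1$ restricted to trees of height exactly $h+1$ coincides with the product order on $Q \times (\mathcal{FT}_Q^{\leq h})^{<\omega}$, where the second factor carries the Higman order built from $\leqq_1$ on $\mathcal{FT}_Q^{\leq h}$. By theorem 3.1 applied to the thin BQO $\mathcal{FT}_Q^{\leq h}$, $(\mathcal{FT}_Q^{\leq h})^{<\omega}$ is thin; by lemma 3.3, the product is thin. The $\Phi$-homogeneous piece embeds into this product as a sub-BQO and is therefore thin.

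To extend from the bounded-height setting to $\mathcal{FT}_Q$, I would apply the colouring algorithm twice more to $\vec{\mathcal{FT}_Q}$. First, the relation $h(T_1) \leq h(T_2)$ has empty $\neg\Phi$-homogeneous piece (no infinite descending chain in $\omega$), so we reduce to non-decreasing heights. Applying it again with $h(T_1) = h(T_2)$, the $\Phi$-homogeneous (constant-height) case decomposes as a countable Borel disjoint union $\bigsqcup_h \{X : \forall k,\, h(X(k)) = h\}$, each summand thin by the induction and with no cross-slice shift edges; a single palette of three colours can therefore be reused on every slice.

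The main obstacle is the remaining strictly-increasing-heights piece. My proposal is to run the colouring algorithm once more with $\Psi(T_1,T_2) \iff r(T_1) \leqq_Q r(T_2)$ comparing root labels: the $\neg\Psi$-homogeneous case yields a continuous factor map $X \mapsto (r(X(k)))_k$ into $\vec{Q}$, and $Q$ is thin. The $\Psi$-homogeneous case, with compatible roots but growing heights and no full embedding, is the genuinely delicate one: the naive children-multiset map $X \mapsto (C(X(k)))_k$ lands in $\vec{\mathcal{FT}_Q^{<\omega}}$ by a root-faithful-failure argument, but invoking theorem 3.1 on $\mathcal{FT}_Q$ at this point is circular. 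The expected resolution adapts the $\partial^\infty$ machinery of section 3.2 to trees: for each bad pair, isolate the maximal root-containing downward-closed subtree of $X(k)$ that $\leqq_1$-embeds into $X(k+1)$, iterate this derivation, and at the stabilised limit extract a factor map to $\vec{Q}$ from the label of the first node at which the embedding fails to extend. Verifying termination of the iteration, Baire-class-1 measurability of the limit map, and that it respects the shift-graph structure is where the real work lies.
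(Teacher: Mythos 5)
The paper's own proof is a short linearization argument: fix for each finite tree $T$ a linear extension $\hat T$, identify $(\hat T,l)$ with a word in $Q^{<\omega}$, and combine Theorem~3.6 with Lemma~2.2. Your route is very different: induction on tree height via the colouring algorithm, then a tree analogue of the $\partial^\infty$ machinery of \S3.2 to pass from bounded to unbounded heights. The two approaches share nothing beyond the eventual appeal to Theorem~3.6.

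Your bounded-height step is essentially sound, though one claim is overstated. The order $\leqq_1$ on trees of height exactly $h+1$ does \emph{not} coincide with the product order on $Q \times (\mathcal{FT}_Q^{\leq h})^{<\omega}$: an embedding may send two children of the root of $T_1$ into nested nodes inside a single principal subtree of $T_2$, so $T_1 \leqq_1 T_2$ can hold while the subtree words fail to Higman-embed. What you actually need, and what does hold, is only $T_1 \nleqq_1 T_2 \Rightarrow f(T_1) \nleqq_{\text{prod}} f(T_2)$ for the map $f(T) = (\text{root label}, \text{word of principal subtrees})$, since gluing a Higman injection of subtrees onto root-to-root always produces a $\leqq_1$-embedding. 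That implication is exactly what Lemma~2.2 asks for, and it holds uniformly on all of $\mathcal{FT}_Q^{\leq h+1}$, so the root-rigidity observation and the $h(T)=h+1$ vs.\ $h(T)\leq h$ split are in fact unnecessary: $f$ together with Lemma~2.2, Lemma~3.3 and Theorem~3.6 closes the inductive step directly.

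The genuine gap is the one you flag yourself: the strictly-increasing-heights branch. The proposed derivation --- ``the maximal root-containing downward-closed subtree of $X(k)$ that $\leqq_1$-embeds into $X(k+1)$'' --- is not well-defined. Unlike for words, where the longest embeddable initial segment is unique, there is in general no canonical maximal embeddable downward-closed subtree of a finite tree: if $X(k)$ is a root with two leaf children and $X(k+1)$ is a two-element chain, the two edges of $X(k)$ are both maximal embeddable subtrees, incomparable, with no distinguished choice. Any arbitrary tie-break endangers precisely the two properties that make $\partial^\infty$ work in \S3.2, namely the commutativity $\partial\circ S = S\circ\partial$ and the continuity underlying the Baire class~1 bound. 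Even granting a well-defined $\partial$, you would still need a tree analogue of Lemma~3.5 --- that at a stabilised, non-derivable stage one can extract a bad sequence in $\vec{Q}$ from ``the first failure node'' --- and that does not reduce to the word-length argument. As written, the argument is incomplete at exactly the point you identify as the crux, whereas the paper's linearization avoids the height induction entirely.
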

	\begin{proof}
		For each $T\in \mathcal{FT}$, let $\hat{T}$ be a linear order extending the order of $T$. Given $(T,l) \in \mathcal{FT}_Q$, we can identify the pair $(\hat{T},l)$ with a sequence in $Q^{<\omega}$. This is because of course, every finite linear order is uniquely determined by size and a $Q$ labeled linear order is then just a $Q$ sequence. Notice that this mapping $(T,l) \rightarrow (\hat{T} , l ) $ satisfies $(S,m) \leqq_1 (T,l) \Rightarrow (\hat{S},m) \leqq_{Q^{<\omega}}  (\hat{T},l)  $. This naturally gives us a homomorphism from $\vec{\mathcal{FT}_Q}$ into $\vec{Q^{<\omega}} $ via lemma 2.2.
	\end{proof}
	\begin{corollary}
		If $Q$ is thin, so are $[Q]^{<\omega}$ and $\mathcal{FT}_Q$ under $\leqq_m $.
	\end{corollary}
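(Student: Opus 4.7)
The plan is to derive both parts from the previous two theorems by producing, in each case, a map that reflects non-comparability into a class already known to be thin, and then appealing to Lemma 2.2.

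For $\mathcal{FT}_Q$ under $\leqq_m$, the argument is nearly immediate. If $f : T_1 \to T_2$ witnesses $(T_1,l_1) \leqq_1 (T_2,l_2)$, then $x <_{T_1} y \Rightarrow f(x) <_{T_2} f(y)$ trivially yields $x \leq_{T_1} y \Rightarrow f(x) \leq_{T_2} f(y)$ (the case $x=y$ being automatic), so the same $f$ also witnesses $(T_1,l_1) \leqq_m (T_2,l_2)$. Hence $\leqq_1 \subseteq \leqq_m$ and therefore $\nleqq_m \subseteq \nleqq_1$. This is precisely to say that the identity map on $\mathcal{FT}_Q$ reflects non-comparability from $\leqq_m$ to $\leqq_1$. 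Since $(\mathcal{FT}_Q, \leqq_1)$ is thin by Theorem 3.7, Lemma 2.2 delivers thinness of $(\mathcal{FT}_Q, \leqq_m)$.

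For $[Q]^{<\omega}$, equipped with its natural Higman-style order in which $A \leqq B$ iff there is an injection $\iota : A \to B$ with $a \leqq_Q \iota(a)$ for every $a \in A$, I would fix a well-ordering $<_W$ of the countable set $Q$ and define $\phi : [Q]^{<\omega} \to Q^{<\omega}$ by enumerating each finite subset $A = \{a_1 <_W \cdots <_W a_n\}$ as the sequence $(a_1, \ldots, a_n)$. The key verification is contrapositive: if $\phi(A) \leqq_{Q^{<\omega}} \phi(B)$ holds via an order-preserving injection $h$ on index sets with $a_k \leqq_Q b_{h(k)}$, then $a_k \mapsto b_{h(k)}$ is an injection $A \to B$ witnessing $A \leqq_{[Q]^{<\omega}} B$. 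Consequently $A \nleqq_{[Q]^{<\omega}} B$ implies $\phi(A) \nleqq_{Q^{<\omega}} \phi(B)$, and Lemma 2.2 combined with Theorem 3.6 yields thinness of $[Q]^{<\omega}$.

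No serious obstacle arises; the corollary simply packages Theorems 3.6 and 3.7 through the hereditary-style principle of Lemma 2.2. The only mild bookkeeping point is that Section 2 did not explicitly fix a quasi-order on $[Q]^{<\omega}$, so the Higman-style order above should be declared at the outset.
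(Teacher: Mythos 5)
Your proof is correct and matches the paper's argument essentially verbatim: for $\mathcal{FT}_Q$ you observe $\leqq_1 \subseteq \leqq_m$ and pass through the identity map, and for $[Q]^{<\omega}$ you well-order $Q$, map each finite set to its increasing enumeration in $Q^{<\omega}$, check this reflects non-comparability, and in both cases invoke Lemma~2.2 together with Theorems~3.6 and 3.7. Your side remark that the quasi-order on $[Q]^{<\omega}$ is never explicitly fixed in Section~2 is also a fair observation about the paper's exposition.
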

	\begin{proof}
		First, well order $Q$. Then, given an $a \in [Q]^{<\omega} $, assign to it the sequence $\hat{a} \in Q^{<\omega} $ of its members placed in order according to the well order. Note that $\hat{a} \leqq_{Q^{\omega}} \hat{b} \Rightarrow a \leqq_m b$. Consequently, $[Q]^{<\omega} $ is thin by theorem 3.6 and lemma 2.2. Similarly for trees, $(S,m)\leqq_1 (T,l) \Rightarrow  (S,m) \leqq_m (T,l)$ for $A,B \in  [Q]^{<\omega}$ and $(S,m),(T,l) \in \mathcal{FT}_Q $. So, by lemma 2.2 and theorem 3.7 $\mathcal{FT}_Q$ is thin under $\leqq_m$.
	\end{proof}
\subsection{Finite To Infinite}
Since Laver showed that labeled $\sigma$-scattered orders are in some sense, no more complicated than finite labeled trees, it comes as no surprise that if $Q$ is thin, so are $Q$-labeled $\sigma$-scattered orders. We will prove this here. We start by showing that any countable collection of $Q$ labeled ordinals where $Q$ is thin, is also thin under $\leqq_{emb}$.   
\begin{lemma}
	Let $Q$ be thin. Then for any cardinal $\kappa$, $Q^{ <\kappa}$ is thin under $\leqq_{emb}$
\end{lemma}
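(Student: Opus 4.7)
I plan to follow the derivation-based strategy of theorem 3.6, adapted to ordinal-valued lengths. First reduce to the countable setting: since thinness of a class is defined through its countable sub-BQOs, it suffices to fix a countable $Q' \subseteq Q^{<\kappa}$ and show $Q'$ is thin. Because $Q'$ has only countably many members, the collection of labels appearing across $Q'$ generates a countable sub-BQO of $Q$, which remains thin by heredity; we may therefore assume $Q$ itself is countable.

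I would then apply the colouring algorithm (lemma 3.1) with $\Phi(r,t) \iff \text{len}(r) \leq \text{len}(t)$. The $\neg\Phi$-homogeneous set is empty, since an infinite strictly descending chain of ordinals does not exist, so it suffices to colour the non-decreasing length set $B = \{X \in \vec{Q'} : \text{len}(X(k)) \leq \text{len}(X(k+1))\text{ for all }k\}$. Mimicking definition 3.3, I would define $m_k(X) = \sup\{\beta \leq \text{len}(X(k)) : (\beta, l_k \upharpoonright \beta) \leqq_{emb} X(k+1)\}$ and let $n_k(X)$ be the analogous infimum witness on the right. Iteratively applying the colouring algorithm isolates the $1$-derivable sequences, on which $\partial X(k) = (m_k(X), l_k \upharpoonright m_k(X))$ is well defined. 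Provided $m_k(X)$ is attained, $\text{len}(\partial X(0)) < \text{len}(X(0))$, and well-foundedness of the ordinals forces iterated derivation to terminate in finitely many steps. The resulting Baire class $1$ map $\partial^\infty$ lands in a ``stuck'' set analogous to $B \setminus D$, on which an analog of lemma 3.5 yields a Borel factor map into $\vec{Q}$ by picking off the first label witnessing non-embedding; thinness of $Q$ then closes the argument.

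The main obstacle is that $m_k(X)$ can fail to be attained when $\text{len}(X(k))$ is a limit ordinal: it is possible for every proper initial segment of $X(k)$ to embed into $X(k+1)$ while $X(k)$ itself does not, so that $m_k(X) = \text{len}(X(k))$ yet no corresponding embedding is realised, and the derivation fails to strictly decrease in length. This pathology is absent from the finite setting of theorem 3.6. To handle it, one further application of the colouring algorithm is required, with the unary relation ``$m_k(X)$ is attained'', separating the attained sub-case (where the derivation proceeds as above) from the unattained sub-case. The unattained sub-case will demand a separate construction, most likely by fixing a cofinal sequence in each limit-length entry of $X$ and using it to produce a Borel factor map into $\vec{Q}$ directly, after which thinness of $Q$ finishes the proof.
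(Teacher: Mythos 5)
Your overall strategy matches the paper's: reduce to a countable $R\subseteq Q^{<\kappa}$, use the colouring algorithm on the length relation to pass to the monotone-length set, define $m_k$/$n_k$-type ordinal parameters, build a derivation $\partial$, show it terminates by well-foundedness, and finish with a factor map into $\vec{Q}$ à la Lemma 3.5. Two places where you diverge, one a real gap and one a cosmetic difference that is actually an improvement if you finish it.

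The gap: you never address the fact that the derivative $\partial X(k)=X(k)\upharpoonright m_k(X)$ need not lie back in your fixed countable set $R$, so ``iterating the derivation'' is not obviously well defined. The paper devotes the bulk of the proof to exactly this: it constructs a closure $R_{fill}=\bigcup_i R^i$ of $R$ under the countably many relevant restriction operations $\alpha(\cdot,\cdot)$, shows $R_{fill}$ is still countable, and observes that thinning $R$ can be reduced to thinning $\vec{R_{fill}}$, on which derivations are internal. Without this step the claim that the $\partial$-operator and $\partial^\infty$ are Borel maps between $\vec{R}$-like spaces does not go through. This is the one genuinely missing ingredient in your plan.

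The cosmetic difference: you define $m_k(X)$ as a supremum of the embedding set and worry that it may fail to be attained when $\text{len}(X(k))$ is a limit, then propose a further application of the colouring algorithm to split off this bad case. The paper instead defines $\alpha_k(X)$ as the \emph{minimum} of the non-embedding set and asserts $\alpha_k(X)<\text{dom}(X(k))$ with a brief (and, as written, incomplete) argument. In fact the case you are worried about is vacuous, and neither a case split nor a separate cofinal-sequence construction is needed: for labeled well-orders the embedding set $E=\{\alpha: X(k)\upharpoonright\alpha\leqq_{emb} X(k+1)\}$ always has a maximum. This follows from a greedy/leftmost-embedding argument: if $\lambda$ is a limit and every $\alpha<\lambda$ lies in $E$, then the leftmost embeddings of the $X(k)\upharpoonright\alpha$ cohere (each is an initial restriction of the next, by minimality), and their union witnesses $\lambda\in E$; hence the least non-member of $E$ is always a successor and $\sup E$ is attained. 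If you prove this one lemma instead of splitting cases, your sup-based $m_k$ is actually cleaner than the paper's min-based $\alpha_k$, which leaves the successor case unexamined. Also note that your initial reduction ``we may therefore assume $Q$ itself is countable'' is harmless but not used by the paper; the paper works directly with a countable $R\subseteq Q^{<\kappa}$.
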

\begin{proof}
	Let $R$ be a countable subset of $Q^{<\kappa}$. Applying our colouring algorithm to $\vec{R}$, we need only consider 
	 \begin{align*}
		A &= \{X \in \vec{R} : \text{len}(X(k))\leq \text{len}(X(k+1))  \}
	 \end{align*}
	 Akin to in the finite case, for $X \in A$ and $k\in \omega$, there is an $\alpha_k(X), \beta_k(X) <\kappa$ where $\alpha_k(X)$ is minimal such that $X(k) \upharpoonright \{\beta : \beta< \alpha_k(X) \} \nleqq_{emb} X(k+1)$ and $\beta_k(X)$ is minimal such that $X(k) \upharpoonright \alpha_k(X) \leqq_{emb} X(k+1)\upharpoonright \beta_k(X)$. Note when $\kappa = \omega$, this is identical to the finite case. Since $R$ is a countable subset, and members from $R$ may be uncountable labeled ordinals, it may not be the case that for every $(\gamma, l) \in R $ and $\zeta< \gamma$, $(\zeta, l\upharpoonright \zeta) \in R$. However, we may assume that $R$ is closed under all relevant restrictions. First, we will prove this claim.
	 \\
	 \\
	 Given a pair $(\gamma_1, l_1 ), (\gamma_2,l_2) \in Q^{<\kappa}$, we define $\alpha( (\gamma_1, l_1 ), (\gamma_2,l_2) )  $ to be the minimal $\alpha$ such that $(\alpha, l_1 \upharpoonright \alpha )\leqq_{emb} (\gamma_2,l_2) $ and $\beta((\gamma_1, l_1 ), (\gamma_2,l_2)) $ the minimal $\beta$ such that $(\alpha,l_1 \upharpoonright \alpha )\leqq_{emb}  (\beta,l_2 \upharpoonright\beta)$. Given $R\subseteq Q^{<\kappa} $, we define the following sets
	 \begin{align*}
	 	R_l &= \{ (\alpha, l): \exists (\gamma_1, l_1 ), (\gamma_2,l_2) \in R, \; (\alpha, l) = (\alpha((\gamma_1, l_1 ), (\gamma_2,l_2)), l_1 \upharpoonright \alpha((\gamma_1, l_1 ), (\gamma_2,l_2)) )   \}\\
	 	R_r &= \{ (\alpha, l): \exists (\gamma_1, l_1 ), (\gamma_2,l_2) \in R, \; (\alpha, l) = (\alpha((\gamma_1, l_1 ), (\gamma_2,l_2)), l_2 \upharpoonright \alpha((\gamma_1, l_1 ), (\gamma_2,l_2)) )   \} \\
	 	R^\prime &= R_l\cup R_r
	 \end{align*}
	 Note that if $R$ is countable, so is $R^\prime$ as $\alpha(\cdot,\cdot)$ is parameterized over pairs from $R$ of which there are only countably many. Given an $R$, we define recursively $\forall i \in \omega$, $R^0 = R$, $R^i = (R^{i-1})^\prime$. The set $R_{fill} = \bigcup\limits_{i=0}^\omega R^i $ contains $R$ is a subset and has the property that for any $ (\gamma_1, l_1 ), (\gamma_2,l_2) \in R_{fill} $, $(\alpha((\gamma_1, l_1 ), (\gamma_2,l_2)), l_1 \upharpoonright \alpha((\gamma_1, l_1 )$ and $(\alpha((\gamma_1, l_1 ), (\gamma_2,l_2)), l_2 \upharpoonright \alpha((\gamma_1, l_1 ) $ are in $R_{fill}$. Consequently, we could define a derivative operation as we did in the finite case on $\vec{R_{fill}}  \supseteq \vec{R}$. We will explain why this is the case next, but for now we've at least shown that we can assume $R$ is sufficiently closed under restrictions. 
	 \\
	 \\
	 Note that it is never the case that $\alpha_{k}(X) = \text{dom}(X(k))$. If it were, then for each $\beta \in \text{dom}(X(k))$, there is an $h(\beta) $ such that $X(k)(\beta) \leqq_Q X(k+1)(h(\beta))$ and $h$ strictly increasing. But of course, this $h$ codifies that $X(k) \leqq_R X(k+1)$, a contradiction. It is also the case that $\forall \gamma \geq \beta_k(X)$, $X(k)(\alpha_k(X)) \nleqq_Q X(k+1)(\gamma)$ akin to the finite case. This means that the sequence $X(k)(\alpha_k(X)) \in \vec{Q}$ when $\alpha_k(X) \leq \beta_k(X) < \alpha_{k+1}(X)$.  
	 \\
	 \\
	 Similar to before, we can say an $X$ is derivable if $\forall k \in \omega$, $\alpha_k(X) \leq \alpha_{k+1}(X)  \leq \beta_k(X) \leq \beta_{k+1}(X)$. We can also define the derivation operation $\partial$ and it's iterates by restricting to $\alpha_k(X)$ coordinate-wise as before. Since we can assume $R$ is closed under these coordinate-wise restrictions, the derivative operation is well defined on $\vec{R}$. Moreover, $\partial$ is a Borel graph homomorphisms, and every $X$ is at most $i$ derivable for some $i$ as there is no infinite descending sequence of ordinals. Consequently, we can define the operator $\partial^\infty$ for derivable members of $\vec{R}$. This mapping is also well defined. It is also a Borel factor map and mimicking the argument from the finite case, we have that $\vec{R}$ is thin. 
\end{proof}
\begin{theorem}
	If $Q$ is thin, then so is $\mathcal{T}_Q$ under $\leqq_1$.
\end{theorem}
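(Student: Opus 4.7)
The plan is to mirror the argument of Lemma 3.8, using the colouring algorithm of Lemma 3.1 to peel off well-behaved sub-sequences in $\vec{\mathcal{T}_Q}$ and then constructing a tree-theoretic analog of $\partial^\infty$ that reduces colourability to the thinness of $\vec{Q}$. Fix a countable $R \subseteq \mathcal{T}_Q$. I first apply the colouring algorithm with the binary relation $\Phi((T_1,l_1),(T_2,l_2))$ declaring that the height of $T_1$ is at most the height of $T_2$. Since heights lie in $\omega+1$, the anti-monotone branch of the algorithm contains no infinite strictly-descending height sequence and is handled directly, so the problem reduces to
\[
A = \{X \in \vec{R} : \forall k \in \omega,\ \mathrm{height}(X(k)) \leq \mathrm{height}(X(k+1))\}.
\]

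Next, for each $X \in A$ and each $k$, I would define $S_k(X) \subseteq X(k)$ to be a $\leqq_1$-maximal $Q$-labeled sub-structure of $X(k)$ that admits a $\leqq_1$-embedding into $X(k+1)$, and $T_k(X) \subseteq X(k+1)$ to be the minimal such image. Closing $R$ under these restrictions, as Lemma 3.8 does for $R_{\mathrm{fill}}$, ensures the derivation stays inside the ambient space. Declaring $X$ to be $1$-derivable whenever $S_k(X)$ and $T_k(X)$ satisfy an inclusion pattern analogous to $m_k(X) \leq m_{k+1}(X) < n_k(X) \leq n_{k+1}(X)$, I would set $\partial X(k) := (S_k(X), l \upharpoonright S_k(X))$. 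Because a well-founded tree-rank (such as a Cantor--Bendixson rank on the non-embedded residue) strictly decreases along $\partial$, each coordinate attains a maximal derivation depth $M_k(X) < \omega$, and the Baire class 1 operator $\partial^\infty$ defined as in Lemma 3.4 commutes with the shift and supplies a Borel graph homomorphism from the derivable members of $\vec{R}$ to the non-derivable ones.

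Finally, on the non-derivable portion I would mimic Lemma 3.5: extract from each $X(k)$ a critical labeled node $v_k$ sitting just outside $S_k(X)$ and verify that $k \mapsto l(v_k)$ yields a Borel factor map into $\vec{Q}$. Thinness of $Q$ together with Lemma 2.2 then finishes the colouring, and a second application of the colouring algorithm (as at the end of Theorem 3.6) absorbs the case where the inclusion pattern fails.

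The principal obstacle is pinning down the correct notion of sub-structure and derivation for trees under $\leqq_1$. Since $\leqq_1$-embeddings need not send roots to roots nor preserve immediate successors, the $\leqq_1$-maximal embedded sub-structure is not simply a downward-closed subtree or an initial segment; one must exhibit a Borel selection for $S_k$ and $T_k$, establish the continuity and shift-compatibility properties analogous to Definition 3.1, and identify a well-founded rank that strictly decreases under $\partial$. Once these technical points are in hand, the remaining architecture of the proof is a routine adaptation of Theorem 3.6 and Lemma 3.8.
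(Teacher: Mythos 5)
Your proposal takes a fundamentally different, and much heavier, route than the paper's, and as written it has a genuine gap. The paper does not re-run the derivation machinery on trees at all. It simply linearizes: given a countable $R\subseteq \mathcal{T}_Q$, for each tree $T$ appearing in $R$ it well-orders each set of immediate successors and takes the lexicographic order, obtaining a well order $\hat{T}$ extending $<_T$. Each $(\hat{T},l)$ then lives in $Q^{<\kappa}$ for a single cardinal $\kappa$ (countability of $R$ matters here), and the map $(T,l)\mapsto(\hat{T},l)$ has the property needed by Lemma 2.2, so the whole theorem reduces in a few lines to Lemma 3.8, which you were already allowed to cite. In other words, the point of proving Lemma 3.8 first is precisely so that one never has to invent a tree-theoretic $\partial^\infty$.

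The gap in your plan is the one you yourself flag and then defer: the objects $S_k(X)$, $T_k(X)$, the ``inclusion pattern analogous to $m_k\leq m_{k+1}<n_k\leq n_{k+1}$,'' the Cantor--Bendixson-type rank, and the ``critical node $v_k$ just outside $S_k(X)$'' are never actually constructed. In the linear case the parameters $m_k,n_k$ are canonical because the domain is linearly ordered by restriction, which is what makes $\partial$ well defined, continuous, and shift-equivariant. For trees under $\leqq_1$ there is no canonical ``initial segment'' of the embedded image: a $\leqq_1$-embedding need not send roots to roots or respect levels, a $\leqq_1$-maximal embedded substructure need not be unique, and a Borel selection of one is not automatic. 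Without a concrete, canonical, Borel, shift-commuting choice of $S_k$ and $T_k$, neither $\partial$ nor $\partial^\infty$ exists, so the architecture you propose to ``routinely adapt'' from Theorem 3.6 and Lemma 3.8 never gets off the ground. Acknowledging this as the ``principal obstacle'' is accurate, but it means the proposal is a plan rather than a proof, and the plan bypasses the far simpler reduction the paper actually uses.
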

\begin{proof}
	Let $R$ be a countable subset of $\mathcal{T}_Q$. Let $\mathcal{I} = \{T:(T,l) \in R \}$. Note that each tree $T\in \mathcal{I}$ can have its order extended to a well order. Simply well order each set of immediate successors from $T$ and then give $T$ the lexicographical order. Given a $T\in \mathcal{I}$, we call the well order $\hat{T}$. Since $\mathcal{I}$ is countable, there is a cardinal $\kappa$ such that each $(\hat{T},l) \in Q^{<\kappa}$. Since the ordering extends the tree order, we have $(T_1,l_1) \nleqq_R (T_2, l_2) \Rightarrow (\hat{T}_1,l_1) \nleqq_{Q^{<\kappa}} (\hat{T}_2, l_2) $. It follows that $\vec{R}$ is thin by our last lemma. Hence, $ \mathcal{T}_Q$ is thin.  
\end{proof}
\begin{corollary}
	If $Q$ is thin, then the class of $Q$ labeled $\sigma$-scattered linear orders is thin. 
\end{corollary}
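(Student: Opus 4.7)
The plan is to unwind Laver's two coding facts (Fact 2.9 and Fact 2.10) into a chain of reductions ending at a quasi-order we have already shown to be thin. Concretely, I aim to establish the chain $Q\text{ thin}\Rightarrow Q^{+}\text{ thin}\Rightarrow \mathcal{T}_{Q^{+}}\text{ thin}\Rightarrow \mathcal{H}(Q)\text{ thin}\Rightarrow \mathcal{H}(Q)^{<\omega}\text{ thin}\Rightarrow \mathcal{C}(Q)\text{ thin}$, with only the first implication requiring real work.

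Once $Q^{+}$ is shown thin, the rest is bookkeeping. Theorem 3.9 immediately gives $\mathcal{T}_{Q^{+}}$ thin. Fact 2.9 provides a map $J:\mathcal{H}(Q)\to \mathcal{T}_{Q^{+}}$ preserving non-$\leqq_{emb}$, so Lemma 2.2 upgrades $\mathcal{T}_{Q^{+}}$ thin to $\mathcal{H}(Q)$ thin. Theorem 3.6 then yields $\mathcal{H}(Q)^{<\omega}$ thin under the Higman order, and Fact 2.10 together with another application of Lemma 2.2 closes the chain at $\mathcal{C}(Q)$ thin.

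The substantive step is $Q$ thin $\Rightarrow Q^{+}$ thin. Since thin for a class means every countable subclass is thin, and $Q^{+}$ is by definition the disjoint union $Q\sqcup \{\kappa :\kappa \in \mathbf{RC}\}\sqcup \{\kappa^{*}:\kappa \in \mathbf{RC}\}\sqcup \{\eta_{\alpha,\beta}:\alpha,\beta \in \mathbf{RC}\}$, iterating Lemma 3.2 reduces the problem to showing each of the four pieces is thin in its own right. The $Q$-piece is thin by hypothesis. On $\{\kappa :\kappa \in \mathbf{RC}\}$ and $\{\kappa^{*}:\kappa \in \mathbf{RC}\}$ the relation $\leqq_{emb}$ coincides with the size ordering on regular cardinals, so any countable subset is linearly well-ordered (respectively reverse well-ordered), admits no infinite bad sequence, and is trivially Borel $3$-colourable.

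The main obstacle is the piece $\{\eta_{\alpha,\beta}:\alpha,\beta \in \mathbf{RC}\}$. I plan to introduce the bookkeeping map $F(\eta_{\alpha,\beta}) = (\alpha,\beta) \in \mathbf{RC} \times \mathbf{RC}$ and verify the non-comparability-preservation hypothesis of Lemma 2.2. Since $\mathbf{RC}$ is trivially thin, Lemma 3.3 yields $\mathbf{RC}\times \mathbf{RC}$ thin, and Lemma 2.2 then finishes the argument. For the verification: if $\alpha_1 \leq \alpha_2$ and $\beta_1 \leq \beta_2$ then $\alpha_1^{*}\leqq_{emb} \alpha_2^{*}$ (since $\alpha_1$ is an initial segment of $\alpha_2$) and $\beta_1 \leqq_{emb} \beta_2$; since $\eta_{\alpha_1,\beta_1}$ embeds neither $\alpha_1^{*}$ nor $\beta_1$ by its defining property (Fact 2.8), it also embeds neither $\alpha_2^{*}$ nor $\beta_2$, so the maximality characterization of $\eta_{\alpha_2,\beta_2}$ forces $\eta_{\alpha_1,\beta_1}\leqq_{emb} \eta_{\alpha_2,\beta_2}$. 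Contrapositively, $\eta_{\alpha_1,\beta_1}\nleqq_{emb} \eta_{\alpha_2,\beta_2}$ implies $(\alpha_1,\beta_1) \nleqq (\alpha_2,\beta_2)$ in the product, which is exactly what Lemma 2.2 requires.
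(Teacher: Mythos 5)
Your proposal is correct and follows essentially the same route as the paper: reduce via Laver's coding facts and Lemma 2.2 to showing $Q^{+}$ is thin, decompose $Q^{+}$ via Lemma 3.2, and reduce the pieces to thinness of $\mathbf{RC}$ and $\mathbf{RC}\times\mathbf{RC}$. The only difference is that you spell out details the paper leaves implicit, in particular the chain of reductions through $\mathcal{T}_{Q^{+}}$, $\mathcal{H}(Q)$, and $\mathcal{H}(Q)^{<\omega}$, and the explicit verification that $\eta_{\alpha,\beta}\mapsto(\alpha,\beta)$ preserves incomparability.
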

\begin{proof}
	It suffices to show $Q^+$ is thin. Since the disjoint union of thin sets are thin, it suffices to show that $\textbf{RC}$ and $\textbf{RC}\times \textbf{RC} $ are thin. Since products of thin sets are thin, we need only show that $\textbf{RC}$ is thin. Given any $R\subseteq \textbf{RC}$ countable, $\vec{R}$ is empty as there is no infinite descending sequence of ordinals. Hence, $\textbf{RC}$ is thin as desired. 
\end{proof}
\section{Acknowledgements}
The author would like to thank Yann Pequignot for their email correspondence, as well as for sharing some notes on the problem. The author would also like to thank the Calgary discrete mathematics seminar for allowing the author to present his initial findings. Finally, the author would like to thank Stevo Todorcevic for introducing him to the problem, and supporting him through the solution process. 
	\newpage

\bibliographystyle{ijmart}

\begin{thebibliography}{9}
	  
	  
	  \bibitem{Fraisse}
	  R. Fra\"iss\'e, Sur la comparaison des types d'ordres. \textit{C. R. Acad. Sci.} Paris 226 (1948)
	  
	  \bibitem{GalvinPrikry}  F. Galvin and K. Prikry. Borel sets and Ramsey’s theorem. \textit{The Journal
	  of Symbolic Logic.} 38:193–198, 6 1973.
	  
	  \bibitem{Hausdorff}
	  F. Hausdorff, Grundz\"uge einer Theorie der geordneten Mengen, \textit{Math. Ann.} 65
	  (1908), 435-505.
	  
	  \bibitem{Higman} G. Higman, Ordering by divisibility in abstract algebras, \textit{Proc. of London Mathematical Society.} 1952
	  
	  \bibitem{DescSet}  A. S. Kechris. Classical descriptive set theory, volume 156 of \textit{Graduate Texts
	  in Mathematics.} Springer-Verlag, New York, 1995.
	  
	  \bibitem{KechLouv} A. S. Kechris, A. Louveau. A Classification of Baire Class 1 Functions. \textit{Tansactions of the American Mathematical Society.} Vol. 318 No. 1 pp 209-236. 1990
	  
	  \bibitem{DescGraph} A. S. Kechris and A. S. Marks. Descriptive graph combinatorics. 2015. \textit{Available at} \text{http://math.ucla.edu/~marks/}
	 
	 \bibitem{KST} A. S. Kechris, S. Solecki, and S. Todorcevic. Borel chromatic numbers. \textit{Adv.
	 	Math.}, 141(1):1-44, 1999.
	 
	 
	 \bibitem{Kunen}
	 Kenneth Kunen,
	 Set Theory.
	 \textit{College Publications.} 2011
	 
	 	\bibitem{Laver1}
	 Richard Laver,
	 On Fra\"iss\'e's Order Type Conjecture.
	 \textit{The Annals of Mathematics.} 93 (1): 89-111. 
	 
	 \bibitem{Laver2}
	 Richard Laver,
	 An Order Type Decomposition Theorem.
	 \textit{Annals of Mathematics}, Second Series, Vol. 98, No. 1 (1973), pp. 96-119
	 
	 \bibitem{Marcone1} A. Marcone. Foundations of bqo theory. \textit{Trans. of the American Mathematical Society.} 345(2):641-660, 1994
	 
	 \bibitem{Marcone2} A. Marcone. The set of better quasi-orderings is $\Pi_2^1$-complete. \textit{Mathematical Logic Quarterly.} 41:373-383, 1995
	 
	 \bibitem{FRec} A. S. Marks. Effective Descriptive Set Theory. \textit{Available at} \text{https://www.math.ucla.edu/~marks/notes/edst\_notes.pdf}
	
	\bibitem{Nash1}
	C. St. J. A. Nash-Williams,
	On well-quasi-ordering infinite trees.
	\textit{Mathematical Proceedings of the Cambridge Philosophical Society.} 61 (3): 697-720, 1965
	
	\bibitem{Nash2}
	C. St. J. A. Nash-Williams,
	On well-quasi-ordering transfinite sequence.
	\textit{Proc. Cambridge Phil. Soc.} 61 (1965), 33-39.
	
	 
	 \bibitem{Peq} Y. Pequignot. Finite versus infinite: an insufficient shift. \textit{Adv. Math.},
	320(1): 244-249, 2017
	
	\bibitem{Pouzet} M. Pouzet. Applications of Well Quasi-Ordering and Better Quasi-Ordering. \textit{Graphs and Order.} vol. 147. pp 503-519, 1984
	
	\bibitem{Vid} S. Todorcevic and Z. Vidnyanszky. A complexity problem for Borel graphs. \textit{submitted,} 2017
\end{thebibliography}

\end{document}